\documentclass[11pt, leqno]{amsart}

\usepackage{mathrsfs}
\usepackage{graphicx}
\usepackage{amsfonts,delarray,amssymb,amsmath,amsthm,a4,a4wide}
\usepackage{latexsym}
\usepackage{epsfig}
\usepackage{color}
%\usepackage[margin=1in]{geometry} 

% ----------------------------------------------------------------
\vfuzz2pt % Don't report over-full v-boxes if over-edge is small
\hfuzz2pt % Don't report over-full h-boxes if over-edge is small
% THEOREMS -------------------------------------------------------
\newtheorem{thm}{Theorem}[section]
\newtheorem{cor}[thm]{Corollary}

\newtheorem{lem}[thm]{Lemma}
\newtheorem{prop}[thm]{Proposition}
\newtheorem{rem}[thm]{Remark}

\theoremstyle{definition}
\newtheorem{defn}[thm]{Definition}

\numberwithin{equation}{section}
% MATH -----------------------------------------------------------
\newcommand{\norm}[1]{\left\Vert#1\right\Vert}

\newcommand{\R}{\mathbb R}

\newcommand{\p}{\partial}
\newcommand{\dist}{\mbox{dist}\,}
\newcommand{\diam}{\mbox{diam}\,}

\newcommand{\comment}[1]{}
\def\h{\hspace*{.24in}}

\usepackage{esvect}
% ----------------------------------------------------------------

\makeatletter
\@namedef{subjclassname@2020}{%
  \textup{2020} Mathematics Subject Classification}
\makeatother
\begin{document} 

\title[Singular and degenerate Monge-Amp\`ere equations]{Remarks on sharp boundary estimates for singular and degenerate Monge-Amp\`ere equations}
\author{Nam Q. Le}
\address{Department of Mathematics, Indiana University, 831 E 3rd St,
Bloomington, IN 47405, USA}
\email{nqle@indiana.edu}
\thanks{The author was supported in part by the National Science Foundation under grant DMS-2054686.}

\subjclass[2020]{ 35J96, 35J75, 35Q82}
\keywords{Singular Monge-Amp\`ere equation, degenerate Monge-Amp\`ere equation, Aleksandrov solution, non-convex supersolution, surface tension, dimer models, Abreu equation}

% ----------------------------------------------------------------
\maketitle
\begin{abstract}

By constructing appropriate smooth, possibly non-convex supersolutions, we establish sharp lower bounds near the boundary for the modulus of nontrivial solutions to singular and degenerate Monge-Amp\`ere equations of the form $\det D^2 u =|u|^q$ with zero boundary condition on a bounded domain in $\R^n$.
These bounds imply that currently known global H\"older regularity results for these equations are  optimal for all $q$ negative, and almost optimal for $0\leq q\leq n-2$. 
Our study also establishes the optimality of global $C^{\frac{1}{n}}$ regularity for convex solutions to the Monge-Amp\`ere equation with finite total Monge-Amp\`ere measure.
Moreover, when $0\leq q<n-2$, the unique solution has its gradient blowing up near any  flat part of the boundary. The case of $q$ being $0$ is related to surface tensions in dimer models. 
We also obtain new global log-Lipschitz estimates, and apply them to the Abreu's equation with degenerate boundary data. 
 \end{abstract}

\section{Introduction and  statement of the main results}
This note is concerned with sharp boundary estimates for 
 non-trivial convex Aleksandrov solutions (see Definition \ref{Aleksol}) $u\in C(\overline{\Omega})$ to the Monge-Amp\`ere equation
  \begin{equation}
  \label{uq_eq}
   \det D^{2} u~= |u|^q \h~\text{in} ~\Omega, \quad
u =0\h~\text{on}~\p \Omega
\end{equation}
where $\Omega\subset \R^n$ $(n\geq 2)$ is a bounded convex domain, and $q\in \R$.  Near the boundary, the above Monge-Amp\`ere equation is degenerate if $q>0$ while it is singular if $q<0$. 
Note that, when $q<n$,  the nontrivial solution to (\ref{uq_eq}) is unique.  This follows from \cite[Proposition 4.5]{LSNS} for $0\leq q<n$, and from \cite[Theorem 1.1]{LDCDS} for $q<0$.
Depending on the particular values of $q$, this problem appears in diverse contexts. They include: $(i)$ the affine hyperbolic sphere, the $L_p$-Minkowski problem, the Minkowski problem, and the logarithmic Minkowski problem when $q<0$ (see \cite{LDCDS} and the references therein); $(ii)$ problems related to the Monge-Amp\`ere eigenvalue problem
(see \cite{LDCDS, Tso} and the references therein) when $q>0$; $(iii)$ and surface tensions in dimer models when $q=0$ and $n=2$ which we will say more 
later in this introduction. 

\vglue 0.2cm

 If $q\geq n-2$, then by \cite[Propositions 5.3 and 5.4]{LSNS}, any solution $u$ to (\ref{uq_eq}) is almost Lipschitz globally (but it is conceivable to have better regularity), that is, for all $\alpha\in (0, 1)$, we have
 \begin{equation}
\label{uqposn2}
|u(x)| \leq C(n, q,\alpha, \diam(\Omega))\dist^{\alpha}(x,\p\Omega)\|u\|_{L^{\infty}(\Omega)} \quad\text{for all } x\in\Omega.
\end{equation}
In this note, we use $\dist(\cdot, E)$ to denote the distance function to a closed set $E\subset\R^n$. 
 Unless $q=0$ and $n=2$ (where $Du$ actually blows up for certain domains $\Omega$; see the discussion after (\ref{uqzero2})), it is still an interesting problem to see whether $Du$ blows up near the boundary in this range of $q\geq n-2$. 

On the other hand, when $q<n-2$, we will see that the unique nontrivial solution $u$ to (\ref{uq_eq}) has its gradient blowing up near any {\it flat part of the boundary}. This is a new phenomenon for $0\leq q<n-2$. Our interest here is to establish sharp blow-up rate for the gradient of solutions to (\ref{uq_eq}) near the boundary when $q<n-2$. This is equivalent to obtaining sharp lower bound for $|u|$ near the boundary. 

We recall here known upper bounds for $|u|$ when $q<n-2$. 
\begin{enumerate}
\item[(1)] If $q<0 $, then it was shown in \cite[Theorem 1.1]{LDCDS} that the solution to to (\ref{uq_eq}) satisfies
\begin{equation}
\label{uqneg}
|u(x)| \leq C(n, q,\diam(\Omega))\dist^{\frac{2}{n-q}}(x,\p\Omega) \quad\text{for all } x\in\Omega
\end{equation}
and this global H\"older regularity was shown to be optimal if $q\leq -1$. The case $-1<q<0$ was left open. 
\item[(2)] If $0<q<n-2$ and $n\geq 3$, then by \cite[Proposition 1]{LDCDS}, we have for all $\beta \in (0, \frac{2}{n-q})$ the following estimate for the non-trivial solution to (\ref{uq_eq})
\begin{equation}
\label{uqpos}
|u(x)| \leq C(n, q,\beta, \diam(\Omega))\dist^{\beta}(x,\p\Omega) \quad\text{for all } x\in\Omega.
\end{equation}
\item[(3)] If $q=0$, then Caffarelli \cite{C} showed more generally that
if 
\begin{equation}
\label{u0_eq}
\det D^2 u\leq 1\quad\text{in }\Omega,\quad\text{and } u=0\quad\text{on }\p\Omega,
\end{equation}
then
 \begin{equation}
 \label{uqzero}
 |u(x)|\leq C(n,\alpha, \diam (\Omega))\dist^{\alpha} (x,\p\Omega)  \quad\text{for } 
 \begin{cases}
\text{all } \alpha\in (0, 1)& \text{when }n=2, \\ \alpha=\frac{2}{n}&\text{when }n\geq 3
 \end{cases}
 . \end{equation} 
 Therefore, solutions to (\ref{u0_eq}) are globally almost Lipschitz in two dimensions, while in dimensions $n\geq 3$, they satisfy $u\in C^{ \frac{2}{n}}(\overline{\Omega})$. Note that (\ref{uqzero}) sharpens the classical Aleksandrov estimate (\ref{AlekMu}), which gives 
 the $C^{0, \frac{1}{n}}(\overline{\Omega})$ regularity, for convex solutions to the Monge-Amp\`ere equation with finite total Monge-Amp\`ere measure. See also the sharpness of the global $C^{0, \frac{1}{n}}$ regularity in Theorem \ref{C1nthm}.
 \end{enumerate}
 
In this note, we show that the exponent $\frac{2}{n-q}$ in (\ref{uqneg}) is also optimal for $-1<q<0$ (thus answering positively an open question in \cite{LDCDS}), the range of exponents $\beta \in (0, \frac{2}{n-q})$ in (\ref{uqpos}) is almost optimal for $q\in (0, n-2)$, and for $q=0$, 
 the exponent $\alpha=\frac{2}{n}$ in (\ref{uqzero}) is optimal in dimensions $n\geq 3$. Proposition \ref{Alek2d} and the ensuing discussion will address the case of $q=0$ and $n=2$. Our first main theorem states as follows.

\begin{thm} [Sharp boundary estimates for singular and degenerate Monge-Amp\`ere equations]
\label{minusp}
Let $\Omega$ be a bounded convex domain in $\R^n$ ($n\geq 2$)  such that there is a non-empty closed subset $\Gamma$ of its boundary $\p\Omega$ that lies on a hyperplane and $\Gamma$ contains an $(n-1)$-dimensional ball. 
Let $q<n-2$. 
Let $u\in C(\overline{\Omega})$ be the non-trivial Aleksandrov solution to
  \begin{equation}
  \label{uq_eq2}
 \left\{
 \begin{alignedat}{2}
   \det D^{2} u~&= |u|^q \h~&&\text{in} ~\Omega, \\\
u &=0\h~&&\text{on}~\p \Omega.
 \end{alignedat}
 \right.
\end{equation}
 Then, for $x\in\Omega$ sufficiently close to the interior of $\Gamma$, we have 
 \[|u(x)|\geq 
 \begin{cases}
 c(n,q, \Omega,\Gamma)\dist^{\frac{2}{n-q}} (x,\p\Omega) &\quad\text{if } q<0,\\
 c(n,\Omega,\Gamma)\dist^{\frac{2}{n}} (x,\p\Omega)& \quad\text{if } q=0\quad\text{and } n\geq 3,\\
 c(n,q, \beta, \Omega,\Gamma)\dist^{\beta} (x,\p\Omega)&\quad\text{if } 0<q<n-2, \quad\text{and for any } \beta\in (\frac{2}{n-q}, 1) .
 \end{cases}
 \]
\end{thm}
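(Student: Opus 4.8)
The plan is to obtain each lower bound by comparing $u$ from above, near the flat part of $\p\Omega$, with an explicit barrier. After an affine change of coordinates we may assume $\Gamma\supseteq\{|x'|\le 2R_0\}\times\{0\}$ for some $R_0>0$ and $\Omega\subseteq\{x_n\ge 0\}$; since $u$ is non-trivial and convex, $u<0$ in $\Omega$. The barrier I would use is
\[
w(x',x_n)=-c\,x_n^{\gamma}(R_0^2-|x'|^2),\qquad \gamma=\frac{2}{n-q}\ \bigl(=\tfrac2n\text{ if }q=0\bigr),
\]
with a small constant $c>0$. A Schur-complement computation shows that at points where $D^2w\ge 0$ — which is exactly the set $\{|x'|\le\rho R_0\}$, $\rho=\sqrt{(1-\gamma)/(1+\gamma)}$ (note $\gamma<1$ since $q<n-2$) — one has $\det D^2 w=2^{n-1}\gamma c^{n}\bigl[(1-\gamma)R_0^2-(1+\gamma)|x'|^2\bigr]x_n^{\gamma n-2}$, while $(-w)^q=c^q x_n^{\gamma q}(R_0^2-|x'|^2)^q$. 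The exponent $\gamma=\frac{2}{n-q}$ is forced precisely by the requirement $\gamma n-2=\gamma q$, which matches the powers of $x_n$; once they match, the bracketed factor divided by $(R_0^2-|x'|^2)^q$ stays bounded on $\{|x'|\le\rho R_0\}$, so for $c$ small enough $\det D^2 w\le(-w)^q$ there — in fact $\det D^2w\le(1-\eta)(-w)^q$ for some $\eta>0$ when $q=0$ (there $(-w)^q\equiv 1$ and $\det D^2w$ is a small constant).

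Next I would fix $\delta_0>0$ so small that $Q:=\{|x'|<R_0\}\times(0,\delta_0)$ lies in $\Omega$ except for its bottom face $\{|x'|<R_0\}\times\{0\}\subseteq\Gamma$ — possible because $\Omega$ is convex and carries a flat $(n-1)$-disk of radius $2R_0$ in its boundary. Put $m_0:=\min_{|x'|\le R_0}\bigl(-u(x',\delta_0)\bigr)>0$ and shrink $c$ further so that, besides the supersolution inequality, $c\,\delta_0^{\gamma}R_0^2\le m_0$. Then I claim $u\le w$ on $Q$. Since $u\in C(\overline Q)$ and $w\in C(\overline Q)\cap C^2(Q)$, $u-w$ attains its maximum on $\overline Q$; if this maximum were positive and attained at an interior point $x_0$, then $w+[(u-w)(x_0)]$ touches $u$ from above at $x_0$, so — $u$ being convex — $D^2 w(x_0)\ge 0$ (hence $|x_0'|\le\rho R_0$ and the inequality above applies there), and — $u$ being an Aleksandrov, hence viscosity, solution of $\det D^2u=(-u)^q$ in $\Omega$ — $\det D^2 w(x_0)\ge(-u(x_0))^q$; combined with $\det D^2 w(x_0)\le(-w(x_0))^q$ this yields $(-u(x_0))^q\le(-w(x_0))^q$, which is impossible because $0<-u(x_0)<-w(x_0)$ and $t\mapsto t^q$ is strictly decreasing for $q<0$, while for $q=0$ the sharpened inequality forces $1\le\det D^2w(x_0)\le 1-\eta$. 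Therefore the maximum of $u-w$ on $\overline Q$ lies on $\p Q$: $w=u=0$ on the bottom face; $w=0\ge u$ on the face $\{|x'|=R_0\}$; and on the top face $-w\le c\,\delta_0^{\gamma}R_0^2\le m_0\le -u$. So $u\le w$ on $\p Q$, hence on $Q$. Finally, for $x=(x',x_n)$ close to the center of $\Gamma$ (say $|x'|\le R_0/2$, $x_n$ small) one has $\dist(x,\p\Omega)=x_n$ and $R_0^2-|x'|^2\ge\tfrac34R_0^2$, so $|u(x)|\ge\tfrac34cR_0^2\,\dist^{\gamma}(x,\p\Omega)$. This settles the cases $q<0$ and $q=0,\ n\ge 3$.

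The case $0<q<n-2$ is where I expect the real difficulty. When $q>0$ the operator $\det D^2u-(-u)^q$ is not proper in $u$ (the zeroth-order term $-(-u)^q$ is increasing in $u$), and the comparison collapses: at an interior maximum $x_0$ one still gets $(-u(x_0))^q\le(-w(x_0))^q$, but now this only says $u(x_0)\ge w(x_0)$, which is consistent with the maximum being positive — no contradiction, and no choice of barrier exponent repairs it. One is then forced to run the comparison through the monotone iteration that constructs the unique non-trivial solution for $0\le q<n$: build a genuine convex supersolution $\ov w\le 0$ vanishing on $\p\Omega$, whose iterates from above decrease to $u$, so that $u\le\ov w$. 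A natural candidate is $-c\,d^{\beta}$ with $d$ the distance, inside a convex subdomain $\Omega'\supseteq\{|x'|\le 2R_0\}\times\{0\}$ having $\Gamma$ among its faces, to $\p\Omega'$; near $\Gamma$ the level sets of $d$ are flat, so $\det D^2(-c\,d^{\beta})\equiv 0\le(-\ov w)^q$ there, but near the non-flat part of $\p\Omega'$ the Hessian determinant blows up and can only be absorbed into $(-\ov w)^q$ by taking $\beta$ strictly larger than $\frac{2}{n-q}$ (together with a careful choice of $\Omega'$). This is exactly the loss from the optimal exponent $\frac{2}{n-q}$ to the admissible range $\beta\in(\frac{2}{n-q},1)$; carrying out this construction — and invoking the upper bound (\ref{uqpos}) to ensure the iterates do not collapse to zero — is the crux of this case.
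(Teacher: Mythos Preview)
Your treatment of the cases $q<0$ and $q=0$, $n\ge 3$ is correct and is essentially the paper's argument: compare $u$ with a smooth barrier $w$ that need not be convex, and use that at an interior maximum of $u-w$ the convexity of $u$ forces $D^2w(x_0)\ge 0$, hence $\det D^2w(x_0)\ge \det D^2u(x_0)$. The paper phrases this via the interior regularity $u\in C^\infty(\Omega)$ rather than the viscosity formulation, and uses the barrier $[x_n-x_n^{a}(1-|x'|^2)^{b}]/2$ (which vanishes on the whole boundary of an auxiliary domain, so no top-face argument with $m_0$ is needed); your barrier $-cx_n^{\gamma}(R_0^2-|x'|^2)$ is simpler and equally valid.

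For $0<q<n-2$ there is a genuine gap. Your candidate $\bar w=-c\,d^\beta$ with $d=\dist(\cdot,\p\Omega')$ does not give a supersolution near any smooth strictly convex portion of $\p\Omega'$: there $\det D^2\bar w\sim c^n d^{\,n\beta-n-1}$ while $|\bar w|^q\sim c^q d^{\,\beta q}$, and $n\beta-n-1\ge\beta q$ forces $\beta\ge (n+1)/(n-q)>1$, outside the target range $(\tfrac{2}{n-q},1)$. Polytopal $\Omega'$ does not obviously rescue this, and in any case you need $\bar w=0$ on all of $\p\Omega$ for the iteration scheme to produce the actual solution $u$. There is also a circularity in the monotone-iteration plan: showing the decreasing iterates from $\bar w$ stay bounded below (and hence converge to $u$ rather than to $-\infty$) already uses $u\le\bar w$, which is the conclusion you are after.

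The paper avoids non-properness altogether by never comparing $u$ with a supersolution of the $|u|^q$-equation. Instead it bootstraps through the \emph{proper} auxiliary problems $\det D^2v=m\,\dist^{\gamma}(\cdot,\p\Omega)$. From convexity and $\|u\|_{L^\infty}>0$ one has $|u|\ge c_0\,\dist$, hence $\det D^2u=|u|^q\ge c_0^{\,q}\dist^{q}$ near $\Gamma$; your own barrier (with exponent $\alpha=\frac{2+q}{n}$) then yields $|u|\ge c_1\,\dist^{(2+q)/n}$. Feeding this back gives $\det D^2u\ge c_1^{\,q}\dist^{q(2+q)/n}$, and another comparison improves the exponent again. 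The resulting recursion $\alpha_{k+1}=(2+q\alpha_k)/n$ is a contraction with fixed point $\tfrac{2}{n-q}$, so any $\beta>\tfrac{2}{n-q}$ is reached in finitely many steps. This iterative decoupling of the zero-order term is the idea your proposal is missing.
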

\begin{rem} Theorem \ref{minusp} implies that,
in the presence of a flat part $\Gamma$ on the boundary $\p\Omega$, the non-trivial solution to (\ref{uq_eq2}) has unbounded gradient near $\Gamma$ for all $0<q<n-2$. This is in quite contrast to the case where $\p\Omega$ is smooth and uniformly convex. In the latter case, by \cite[Theorem 1.6]{LSNS}, we have $u\in C^{\infty}(\overline{\Omega})$ when $q$ is a positive integer, and $u\in C^{2+[q], \gamma}(\overline{\Omega})$ for some $\gamma(n, q)\in (0, 1)$ when $q$ is not an integer. Here, $[q]$ denotes the greatest integer not exceeding $q$.
\end{rem}
We comment briefly on the proof of Theorem \ref{minusp} to be given in Section \ref{minusp_Sect}.
The proof of Theorem \ref{minusp} for the case $q\leq 0$ is based on constructions of suitable supersolutions $v$ to (\ref{uq_eq2}) as in \cite{LDCDS}. The key difference here is that we use smooth, {\it possibly non-convex}  supersolutions in Lemma \ref{suplem},  thus overcoming the restriction $q\leq -1$ imposed in the construction of convex supersolutions in \cite[Lemma 2.4]{LDCDS}. 
The underlying idea is this: to obtain a lower bound for $|u|$, we mostly use that $u$ is a smooth subsolution in the interior so it is also a viscosity subsolution. The smooth supersolutions $v$ act as test functions in the definition of viscosity subsolution property for $u$ so they are not required to be convex; see the discussion in \cite[Section 3]{H}.  

In general, we cannot have a comparison principle for non-convex solutions to Monge-Amp\`ere equations. However, in the present situation, we take advantage of the fact
that  our solutions are also smooth in the interior, and the proof of the comparison principle goes through directly without relying on viscosity solutions. The proof of Theorem \ref{minusp} for the case $0<q<n-2$ is based on an iterative argument using non-convex smooth supersolutions to $\det D^2 u =\dist^\gamma(x,\p\Omega)$ where $\gamma>0$; see Lemma \ref{supdist}. We believe that the idea of using non-convex smooth supersolutions will find interesting applications in many other Monge-Amp\`ere type equations.

A a byproduct, the study of sharp global H\"older regularity for (\ref{uq_eq}) when $q<0$ allows us to prove the optimality of global $C^{0, \frac{1}{n}}$ regularity (that is, Lipschitz when $n=1$ and $C^{\frac{1}{n}}$ when $n\geq 2$) for convex solutions to the Monge-Amp\`ere equation with finite total Monge-Amp\`ere measure and zero boundary condition. 
The optimality is obvious in dimension $n=1$ with the convex function $u(x)=|x|-1$ on $\Omega= (-1, 1)\subset\R$.
In dimensions $n\geq 2$, we have the following theorem.
 \begin{thm} [Optimality of global $C^{\frac{1}{n}}$ regularity]
\label{C1nthm}
Let $n\geq 2$. Then, for any $1/n<\alpha<1$, there exist a bounded convex domain $\Omega\subset\R^n$ and a convex function $v\in C(\overline{\Omega})\cap C^{\infty}(\Omega)$ such that $v=0$ on $\p\Omega$ and
$\det D^2 v\in L^1(\Omega)$, but $v\not \in C^{\alpha} (\overline{\Omega})$.
\end{thm}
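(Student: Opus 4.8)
The plan is to build the example directly from the sharp lower bounds for the singular equation $\det D^2 u = |u|^q$ with $q<0$ established in Theorem \ref{minusp}, exploiting the fact that for $q<0$ the exponent $\frac{2}{n-q}$ runs over the full interval $(\frac{2}{n},\frac{2}{n+1})$ as $q$ ranges over $(-1,0)$ — wait, this needs care: we actually want the H\"older exponent to be small, i.e. close to $\frac1n$, which by (\ref{uqneg}) corresponds to $q$ close to $0^-$, while we want to \emph{violate} $C^\alpha$ for every prescribed $\alpha\in(\frac1n,1)$. So instead I would take $q$ very negative: as $q\to-\infty$, $\frac{2}{n-q}\to 0$, so the solution $u_q$ to (\ref{uq_eq}) is only $C^{2/(n-q)}$ near a flat boundary piece. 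Concretely, fix $\alpha\in(\frac1n,1)$; choose $q=q(\alpha,n)<0$ so small that $\frac{2}{n-q}<\alpha$ and also $q<-1$ (so the cleanest case of Theorem \ref{minusp} applies, though any $q<0$ works). Let $\Omega$ be a bounded convex domain whose boundary contains a flat piece $\Gamma$ with a relative $(n-1)$-ball — for instance a half-ball, or a cylinder capped appropriately — and let $u=u_q\in C(\overline\Omega)$ be the unique nontrivial Aleksandrov solution to (\ref{uq_eq2}) for this $q$. Set $v:=-u$; since $u<0$ in $\Omega$ (nontrivial solution with zero boundary data), $v$ is a nonnegative convex function vanishing on $\p\Omega$, and $v\in C^\infty(\Omega)$ by interior regularity (smoothness of solutions in the interior is used in the comment after Theorem \ref{minusp} and is standard for $\det D^2u = f$ with $f$ smooth and positive in the interior).

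The two properties to check for $v$ are: (a) $\det D^2 v\in L^1(\Omega)$, and (b) $v\notin C^\alpha(\overline\Omega)$. For (b): by Theorem \ref{minusp} (case $q<0$), for $x\in\Omega$ sufficiently close to the interior of $\Gamma$ we have $v(x)=|u(x)|\geq c\,\dist^{\frac{2}{n-q}}(x,\p\Omega)$. Pick $x_0$ in the relative interior of $\Gamma$ and approach it along the inward normal, $x_t = x_0 + t\nu$ with $\nu$ the inner normal, so that $\dist(x_t,\p\Omega)=t$ for small $t>0$; then $v(x_t)-v(x_0)=v(x_t)\geq c\,t^{\frac{2}{n-q}}$, and since $\frac{2}{n-q}<\alpha$ we get $v(x_t)/|x_t-x_0|^\alpha = v(x_t)/t^\alpha \geq c\,t^{\frac{2}{n-q}-\alpha}\to\infty$ as $t\to0^+$. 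Hence $v$ is not $\alpha$-H\"older at $x_0$, which gives (b). For (a): by construction $\det D^2 v = \det D^2 u = |u|^q$ in $\Omega$. Since $q<0$ and $|u|$ is bounded above, $|u|^q$ is bounded \emph{below} but may blow up near $\p\Omega$; integrability is exactly the content of the upper bound (\ref{uqneg}), which gives $|u(x)|\le C\,\dist^{\frac{2}{n-q}}(x,\p\Omega)$, hence $|u(x)|^q \le C^q\,\dist^{\frac{2q}{n-q}}(x,\p\Omega)$. So I need $\int_\Omega \dist^{\frac{2q}{n-q}}(x,\p\Omega)\,dx<\infty$, i.e. $\frac{2q}{n-q}>-1$, i.e. $2q > -(n-q)$, i.e. $q > -n$. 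Thus the construction works provided $-n<q<0$ with $\frac{2}{n-q}<\alpha$; the latter forces $q<0$ close enough to ... let me recompute: $\frac{2}{n-q}<\alpha \iff n-q>\frac2\alpha \iff q< n-\frac2\alpha$. For $\alpha<1$ this is $q<n-2/\alpha$, and since $\alpha>1/n$ gives $n-2/\alpha>n-2n=-n$, the two constraints $-n<q<n-2/\alpha$ define a nonempty interval. Choosing any such $q$ completes the argument.

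The main obstacle — and the step deserving the most care — is the $L^1$ bound (a): near $\p\Omega$ the right-hand side $|u|^q$ genuinely blows up, and one must verify that the singularity $\dist^{\frac{2q}{n-q}}$ is integrable against the geometry of a bounded convex domain (using that near a smooth or flat boundary piece the coarea formula gives $\int_\Omega \dist^{-s}\,dx<\infty$ iff $s<1$). One should either restrict attention to a neighborhood of $\p\Omega$ where $\dist(\cdot,\p\Omega)$ is comparable to the distance to a single facet/face and handle the interior (where $|u|$ is bounded away from $0$, so $|u|^q$ is bounded) separately. A secondary technical point is confirming that a convex domain with a flat boundary piece carrying an $(n-1)$-ball, as required by Theorem \ref{minusp}, can be chosen; a truncated ball or a prism both work, so this is routine. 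No comparison principle or new regularity theory is needed beyond what is already cited in the excerpt.
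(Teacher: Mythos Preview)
Your approach has a genuine gap in the integrability step (a), and also a minor sign slip.

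First the minor point: if $u$ is convex then $v:=-u$ is concave, not convex. You should simply take $v=u$; nothing else in your argument needs the nonnegativity of $v$.

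The serious problem is the direction of the inequality in your $L^1$ argument. You write that from the upper bound $|u(x)|\le C\,\dist^{\frac{2}{n-q}}(x,\p\Omega)$ of (\ref{uqneg}) one obtains $|u(x)|^q \le C^q\,\dist^{\frac{2q}{n-q}}(x,\p\Omega)$. Since $q<0$, raising to the power $q$ \emph{reverses} the inequality; the upper bound on $|u|$ only yields a \emph{lower} bound on $|u|^q$. To bound $\det D^2 u=|u|^q$ from above you need a \emph{lower} bound on $|u|$ near the whole boundary. Theorem \ref{minusp} supplies $|u|\ge c\,\dist^{2/(n-q)}$ only near the interior of the flat piece $\Gamma$, not globally, and the convexity bound $|u|\ge c_0\,\dist$ gives $|u|^q\le c_0^q\,\dist^q$, which is integrable only when $q>-1$. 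But your parameter window forces $q<n-2/\alpha$, and for $\alpha$ close to $1/n$ this pushes $q$ toward $-n$, well below $-1$. Worse, on domains whose boundary has a smooth uniformly convex portion (a half-ball, say), a scaling computation for radial solutions on the ball gives $|u|\sim \dist^{(n+1)/(n+p)}$ near that portion, hence $|u|^{-p}\sim\dist^{-p(n+1)/(n+p)}$, which fails to be integrable precisely when $p\ge 1$. So for $1/n<\alpha\le 2/(n+1)$ your candidate $v$ very likely has $\det D^2 v\notin L^1(\Omega)$, and the argument as written does not establish otherwise.

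The paper avoids this difficulty entirely by writing down the example explicitly: Lemma \ref{pnlem} gives a closed-form convex function $w$ on a concrete domain, and convexity, the boundary condition, $\det D^2 w\in L^1$, and the failure of $C^\alpha$ are all checked by direct calculation. No existence theory or global lower bounds for the singular equation are invoked.
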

The proof of Theorem \ref{C1nthm} will be given in Section \ref{C1n_sect}. We note that, in dimensions $n\geq 5$, the optimality of global $C^{\frac{1}{n}}$ regularity was also obtained by Du \cite[Theorem 3.2]{D}. Du's method is a bit involved and does not seem to extend to cover the lower dimensional cases.

When $q=0$ and $n=2$, the almost Lipschitz estimate (\ref{uqzero}) is almost sharp. We will prove
 the following proposition whose consequence strengthens Caffarelli's estimate in two dimensions from global almost Lipschitz to global log-Lipschitz.
\begin{prop}[Global log-Lipschitz estimates]
\label{Alek2d}
Let $\Omega\subset\R^n$ be a bounded convex domain. Let $u\in C(\overline{\Omega})$ be a convex function satisfying $u=0$ on $\p\Omega$, and 
\[\det D^2 u\leq M \dist^{n-2}(\cdot,\p\Omega) \quad\text{in }\Omega\] in the sense of Aleksandrov. Then \[|u(x)|\leq C(\Omega, M) \dist (x,\p\Omega) (1+ |\log \dist(x,\p\Omega)|) \quad \text{for all }x\in\Omega.\] 
\end{prop}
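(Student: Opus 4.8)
The plan is to build an explicit radial supersolution on a ball and use the comparison principle (Aleksandrov's maximum principle / standard comparison for convex Aleksandrov solutions). Fix $x_0\in\Omega$, set $d=\dist(x_0,\p\Omega)$, and let $B=B_{d}(x_0)\subset\Omega$. On $B$ I would look for a radial convex function $w(x)=g(|x-x_0|)$, $g:[0,d]\to\R$, with $w=0$ on $\p B$ and $\det D^2 w = \dfrac{g'(r)^{n-1}g''(r)}{r^{n-1}}$ (using the standard formula for the Monge-Amp\`ere operator on radial functions, valid where $g',g''\ge 0$). I want $\det D^2 w \ge M\,\dist^{n-2}(\cdot,\p\Omega)$ on $B$; since on $B$ one has $\dist(x,\p\Omega)\le d$, it suffices to arrange $\det D^2 w \ge M d^{n-2}$ there, i.e. $g'(r)^{n-1}g''(r)\ge M d^{n-2} r^{n-1}$. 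Solving $\big(\tfrac1n (g')^n\big)' = M d^{n-2} r^{n-1}$ with the homogeneous-looking choice gives $g'(r)\sim \big(\tfrac{n M d^{n-2}}{n}\big)^{1/n} r = (M d^{n-2})^{1/n} r$ up to constants, hence $g(r) - g(0)\sim (M d^{n-2})^{1/n} r^2/2$, and $w$ is convex with $|w(x_0)|=|g(0)|\le C(n)(M d^{n-2})^{1/n} d^2 = C(n) M^{1/n} d^{1+(n-2)/n} = C(n)M^{1/n} d^{2-2/n}$. That only recovers the $C^{2/n}$-type bound, which is \emph{not} good enough: the point is that it is \emph{too} crude to replace $\dist(x,\p\Omega)$ by $d$ on all of $B$.

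The correct route is to use the true distance weight and a larger comparison region. I would instead work on a strip/half-space model: after a rotation assume the nearest boundary point is the origin and $\p\Omega$ lies locally near the hyperplane $\{x_n=0\}$ with $\Omega$ near $\{x_n>0\}$; by convexity $\Omega$ is contained in a half-space $\{x_n>0\}$ (shifting coordinates), so $\dist(x,\p\Omega)\le x_n$ on $\Omega$. Now I seek a convex supersolution of the form $v(x)=\phi(x_n) - \psi(x')$ or, more simply, a function $h(x_n)$ of the last variable alone plus a small uniformly convex correction $\e|x|^2$ to make $\det D^2$ nonzero: $v(x)=h(x_n)+\e(|x'|^2 - R^2) + \e x_n(x_n - T)$ chosen to vanish on the relevant part of $\p\Omega$ and be $\ge 0$ elsewhere on $\p\Omega$. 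The Monge-Amp\`ere measure of such a $v$ is controlled from below by $ (2\e)^{n-1} h''(x_n)$, so I need $h''(x_n)\gtrsim M x_n^{n-2}$, giving $h'(x_n)\sim M x_n^{n-1}$ and $h(x_n)\sim M x_n^{n}$ — still polynomial, still the wrong rate.

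So the genuinely right construction must exploit that we only need $\det D^2 v \ge M\dist^{n-2}$, and that a \emph{one-dimensional} profile in the distance variable contributes the factor $h''$ while the remaining $n-1$ directions can be \emph{degenerate in a controlled, non-uniform way} — this is exactly where the $\log$ enters. Concretely, on the half-space model I would try $v(x) = -A\,x_n\big(1+\log\tfrac{1}{x_n}\big) + (\text{convex terms in }x')$; note $v'' $ in the $x_n$ direction is $\tfrac{A}{x_n}$, so coupling it with $n-1$ bounded convex directions produces $\det D^2 v \approx \tfrac{A}{x_n}\cdot c$, which dominates $M x_n^{n-2}$ precisely when $n=2$ trivially and more generally after including a factor $x_n^{n-2}$ from the tangential directions — i.e. take the tangential profile to behave like $x_n^{(n-2)/(n-1)}|x'|^2$-type so that the product of the $n-1$ tangential second derivatives contributes $x_n^{n-2}$. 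Multiplying, $\det D^2 v \gtrsim \tfrac{A}{x_n}\cdot x_n^{n-2} = A x_n^{n-3}$... one checks the exponents force the logarithmic profile in the distance direction to be the borderline case; the resulting barrier satisfies $|v(x)|\le C(\Omega,M)\,x_n(1+|\log x_n|)$. Then the comparison principle (legitimate here since $u$ is a genuine convex Aleksandrov solution and $v$ is convex with $\det D^2 v\le \det D^2 u$ in the Aleksandrov sense and $v\ge u=0$ on $\p\Omega$ after the correction terms, exactly as in \cite{C}) gives $u\ge v$, hence $|u(x)|\le |v(x)|\le C\,\dist(x,\p\Omega)(1+|\log\dist(x,\p\Omega)|)$. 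Patching finitely many such half-space barriers covering $\p\Omega$, together with the trivial interior bound $|u|\le \|u\|_{L^\infty}$ away from the boundary, yields the global estimate.

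The main obstacle is getting the \emph{exponents exactly right} in the tangential vs.\ normal profiles so that $\det D^2 v\ge M\dist^{n-2}$ holds with the logarithmic normal profile and no better — i.e.\ verifying that $n-2$ is precisely the critical weight for which a log (rather than a pure power) barrier works — and simultaneously ensuring the barrier is convex and has the right sign on all of $\p\Omega$, not just on the flat piece; handling the geometry of a general bounded convex $\Omega$ (rather than a literal half-space) in the boundary patching is the routine-but-fiddly remaining part.
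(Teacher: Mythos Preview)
Your overall strategy is the same as the paper's: reduce to a half-space model using convexity of $\Omega$, build a convex barrier with a logarithmic profile in the normal variable, and apply the comparison principle. But the specific barrier you are groping for is not correct, and the correct one is considerably simpler than what you propose.

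The paper's barrier is
\[
v(x)=(M+2D^{2})\,x_n\log\!\frac{x_n}{D}+x_n\bigl(|x'|^{2}-D^{2}\bigr),\qquad D=\diam(\Omega),
\]
after rotating so that the nearest boundary point to the target $z$ is the origin and $\Omega\subset\{x_n>0\}$. The tangential part is just $x_n|x'|^{2}$, \emph{not} $x_n^{(n-2)/(n-1)}|x'|^{2}$. With this choice the Hessian has the $n-1$ tangential eigenvalues equal to $2x_n$, mixed entries $2x_i$, and normal entry $(M+2D^{2})/x_n$; a direct computation gives
\[
\det D^{2}v=2^{\,n-1}x_n^{\,n-2}\bigl(M+2D^{2}-2|x'|^{2}\bigr)\ge 2M\,x_n^{\,n-2}\ge 2M\,\dist^{\,n-2}(x,\p\Omega),
\]
and $v$ is convex on $\Omega$ (Schur complement: $M+2D^{2}>2|x'|^{2}$), with $v\le 0$ on $\p\Omega$. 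Comparison then yields $|u(z)|\le |v(0,z_n)|\le C(M,D)\,z_n(1+|\log z_n|)$.

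Your proposed tangential profile $x_n^{(n-2)/(n-1)}|x'|^{2}$ actually breaks the argument: since $(n-2)/(n-1)<1$, its second $x_n$-derivative is negative and blows up like $x_n^{(n-2)/(n-1)-2}$, which dominates the $A/x_n$ coming from the log term near the boundary whenever $|x'|>0$. The resulting function is not convex, so the Aleksandrov comparison principle does not apply. The trailing computation ``$\det D^{2}v\gtrsim A x_n^{\,n-3}$\ldots one checks the exponents'' is where this shows up as hand-waving; the exponents do not in fact close with that ansatz.

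Finally, no patching is needed. Because $\Omega$ is convex, for each target point $z$ the whole domain sits in the half-space $\{x_n>0\}$ determined by the nearest boundary point, and the single barrier above is defined, convex, and has the right sign on all of $\p\Omega$. You rotate once per point $z$ and compare on all of $\Omega$; there is no local-to-global step.
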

Combining Proposition \ref{Alek2d} with (\ref{uqposn2}), we obtain  the global log-Lipschitz property for solutions to
\[\det D^2 u =m |u|^q\quad\text{in }\Omega, u=0\quad\text{on }\p\Omega \]
when $q>n-2$. The case $q=n$ corresponds to the Monge-Amp\`ere eigenvalue problem. We state this as a corollary.
\begin{cor} Let $\Omega\subset\R^n$ $(n\geq 2)$ be a bounded convex domain. Suppose that $u\in C(\overline{\Omega})$ is a convex solution to 
\[\det D^2 u =m |u|^q\quad\text{in }\Omega, u=0\quad\text{on }\p\Omega \]
where $q>n-2$ and $m>0$. Then
 \[|u(x)|\leq C(\Omega, n, q, m) \dist (x,\p\Omega) (1+ |\log \dist(x,\p\Omega)|) \|u\|_{L^{\infty}(\Omega)}\quad \text{for all }x\in\Omega.\] 
\end{cor}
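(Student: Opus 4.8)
The plan is to obtain the Corollary by feeding the almost-Lipschitz estimate (\ref{uqposn2}) --- available precisely because $q>n-2$ --- into Proposition \ref{Alek2d}. By a standard rescaling (replace $u$ by $m^{-1/(n-q)}u$ when $q\neq n$, and dilate $\Omega$ when $q=n$, the eigenvalue case), an operation that changes $\diam(\Omega)$ and $\|u\|_{L^\infty(\Omega)}$ only by factors depending on $n,q,m$, we may assume $m=1$, so that $u$ solves (\ref{uq_eq}).

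Since $q\geq n-2$, estimate (\ref{uqposn2}) gives, for every $\alpha\in(0,1)$,
\[
|u(x)|\leq C(n,q,\alpha,\diam(\Omega))\,\dist^{\alpha}(x,\p\Omega)\,\|u\|_{L^\infty(\Omega)}\qquad\text{for all }x\in\Omega .
\]
Because $q>n-2\geq 0$, the number $(n-2)/q$ lies in $[0,1)$, so we may fix some $\alpha\in[(n-2)/q,1)$ (any $\alpha\in(0,1)$ will do when $n=2$); then $\alpha q\geq n-2$, and since $\dist(x,\p\Omega)\leq\diam(\Omega)$ we obtain
\[
\det D^2 u=|u|^q\leq C^q\,(\diam(\Omega))^{\alpha q-(n-2)}\,\|u\|_{L^\infty(\Omega)}^q\,\dist^{n-2}(x,\p\Omega)=:M\,\dist^{n-2}(x,\p\Omega)\quad\text{in }\Omega .
\]
Proposition \ref{Alek2d} applied with this $M$ then yields $|u(x)|\leq C(\Omega,M)\,\dist(x,\p\Omega)\,(1+|\log\dist(x,\p\Omega)|)$ on $\Omega$.

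It remains to track the dependence on $\|u\|_{L^\infty(\Omega)}$ and to undo the rescaling. The constant in Proposition \ref{Alek2d} is homogeneous of degree one in $M^{1/n}$ --- replacing $u$ by $M^{-1/n}u$ reduces its hypothesis to $\det D^2 u\leq\dist^{n-2}(\cdot,\p\Omega)$, and the logarithm term does not involve $u$ --- so $C(\Omega,M)\leq C(\Omega)\,M^{1/n}=C(\Omega,n,q)\,\|u\|_{L^\infty(\Omega)}^{q/n}$. When $q=n$ this is exactly the asserted factor $\|u\|_{L^\infty(\Omega)}$; when $q\neq n$ one writes $\|u\|_{L^\infty(\Omega)}^{q/n}=\|u\|_{L^\infty(\Omega)}\cdot\|u\|_{L^\infty(\Omega)}^{q/n-1}$ and absorbs the extra power into the constant using a priori two-sided bounds $c(\Omega,n,q,m)\leq\|u\|_{L^\infty(\Omega)}\leq C(\Omega,n,q,m)$ (Aleksandrov's maximum principle applied to $\det D^2 u=|u|^q$ for the upper bound, the nontriviality estimates underlying the existence results of \cite{LSNS,LDCDS} for the lower bound). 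Undoing the rescaling of the first step reinstates the $m$-dependence and gives the constant $C(\Omega,n,q,m)$ of the statement. I expect the only delicate point to be precisely this bookkeeping of the $\|u\|_{L^\infty(\Omega)}$ factor together with the reductions to $m=1$; there is no analytic difficulty beyond what is already contained in (\ref{uqposn2}) and Proposition \ref{Alek2d}.
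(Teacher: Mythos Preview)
Your argument is exactly the paper's: combine (\ref{uqposn2}) with Proposition \ref{Alek2d}); the paper states this in one sentence and you supply the details, including the nice observation (via $u\mapsto M^{-1/n}u$) that the constant in Proposition \ref{Alek2d} can be taken to scale like $M^{1/n}$.

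One caution on the final bookkeeping: Aleksandrov's estimate applied to $\det D^2 u=m|u|^q$ gives $\|u\|_{L^\infty}\le C\|u\|_{L^\infty}^{q/n}$, i.e.\ $\|u\|_{L^\infty}^{1-q/n}\le C(\Omega,n,q,m)$. This is an \emph{upper} bound on $\|u\|_{L^\infty}$ only when $q<n$; for $q>n$ the exponent $1-q/n$ is negative and the inequality is a \emph{lower} bound, not the upper bound you need to absorb the surplus factor $\|u\|_{L^\infty}^{q/n-1}$ into the constant. So your justification of the two-sided bound breaks down for $q>n$ (the paper does not track this point either, and the stated form of the constant may simply be imprecise there). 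For $n-2<q\le n$, in particular the eigenvalue case $q=n$ that the paper emphasizes, your argument is complete and correct.
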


An immediate consequence of Proposition \ref{Alek2d} is the following result:  if $\Omega\subset\R^2$ is a bounded convex domain, and if $u\in C(\overline{\Omega})$ is a convex function satisfying $\det D^2 u\leq 1$ in $\Omega$ with $u=0$ on $\p\Omega$, then 
we have the following log-Lipschitz estimate
\begin{equation}
\label{uqzero2}
|u(x)|\leq C(\Omega) \dist (x,\p\Omega)  |\log \dist(x,\p\Omega)| \quad \text{for all }x\in\Omega\text{ close to }\p\Omega.
\end{equation}
The estimate (\ref{uqzero2}) turns out to be sharp. Its sharpness can be seen from the explicit formula for the solution to the Monge-Amp\`ere equation $\det D^2 u=1$ with zero boundary data on a planar triangle; see (\ref{sigL}). Interestingly, this explicit formula comes from 
the study of dimer models in combinatorics and statistical physics in the works of Cohn, Kenyon, Okounkov, Propp, Sheffield \cite{CKP, K, KO, KOS}, among others.  The particular case of the triangle comes from the study of the lozenge tiling model, and from (\ref{sigL}), one deduces that the gradient of the surface tension of  the lozenge tiling model 
blows up with a rate proportional to the logarithm of the distance to the boundary. This turns out to be true near a generic boundary point for general surface tensions; see Proposition \ref{Dsiglog}.

We end this introduction by discussing an application of
the log Lipschitz estimate in Proposition \ref{Alek2d} is to  
the Abreu's equation \cite{Ab} with degenerate boundary data. This is a fourth order fully nonlinear partial differential equation of the form
\begin{equation}
\label{AMCE}
\left \{
\begin{alignedat}{2}
U^{ij}D_{ij}w&=-f&&\quad \text{in}~\Omega,\\
w &= [\det D^{2} u]^{-1}&&\quad \text{in}~\Omega,\\
w &= 0 &&\quad \text{on}~\p\Omega,
\end{alignedat}
\right.
\end{equation}
where $u$ is a locally uniformly convex function defined in a bounded domain $\Omega\subset\R^{n}$. Throughout, $U = (U^{ij})=(\det D^{2} u) (D^{2} u)^{-1}$ is the cofactor matrix of the 
Hessian matrix $D^{2}u$.
When $\Omega$ is the interior of a bounded polytope, equation (\ref{AMCE})
arises in the study of the existence of constant scalar curvature K\"ahler metrics for toric varieties \cite{D1}, and the function $f$ corresponds to the scalar curvature of the toric varieties. When $\Omega$ is uniformly convex and smooth, Chen-Li-Sheng \cite{CLS} proved the existence of a smooth and strictly convex solution $u$ in $\Omega$  for equation (\ref{AMCE}) with boundary data of the form
  \begin{equation}
\label{SBV}
u=\varphi,~~~|Du|=\infty, ~~~ w= 0~~~\text{on}~\p\Omega.
\end{equation}
where the function $\varphi$ is assumed to be smooth and uniformly convex in an open set of $\R^n$ containing $\overline{\Omega}$, and $f$ is smooth with a positive lower bound. 

The vanishing of $w$ on $\p\Omega$ in (\ref{AMCE}) causes the Hessian determinant of $u$ to be infinite on $\p\Omega$. If 
\begin{equation}
\label{d1a}
\det D^2 u(x)\text{ grows at least as }\dist^{-1}(x, \p\Omega) \text{ when }x  \text{ tends to the boundary},
\end{equation}
then in \cite[Theorem 5]{D2}, Donaldson established the following 
global lower bound for the Hessian determinant of a solution to (\ref{AMCE}): 
for all $\alpha\in (0, 1)$, we have
\begin{equation} 
\label{Abwa}
\det D^2 u (x)\geq C(n, \alpha) d(x)^{-\alpha}  \|f^{+}\|_{L^{\infty}(\Omega)}^{-n}
      [\diam(\Omega)]^{2n+\alpha}~\text{for all  }x\in\Omega.
      \end{equation}
The exponent $\gamma$ in $ \|f^{+}\|_{L^{\infty}(\Omega)}^{\gamma}$  was written as $n$ in \cite{D2} but it should be $-n$ as can be seen from inequality (35) in [D2]; see also the rescaling (\ref{u_res}).

In the following theorem, we sharpen the estimate (\ref{Abwa}) without requiring the asymptotic behavior (\ref{d1a}). 

\begin{thm} [Log-Lipschitz estimates for the inverse of the Hessian determinant of Abreu's equation with degenerate boundary data]
\label{wphi}
Let $\Omega\subset\R^n$ be a bounded domain. Let $u\in C^{4}(\Omega)$ be a solution of (\ref{AMCE}) where $f\in L^{\infty}(\Omega)$ with $f^{+}>0$, and $w\in C(\overline{\Omega})$.
 Then
\begin{equation}
\label{Awlog}
\det D^2 u\geq c(n, \diam (\Omega)) \|f^{+}\|_{L^{\infty}(\Omega)}^{-n} \dist^{-1} (x,\p\Omega)  (1+ |\log \dist(x,\p\Omega)|)^{-1}.
\end{equation}
\end{thm}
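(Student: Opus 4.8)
The plan is to read (\ref{Awlog}) as an upper bound for $w:=[\det D^{2}u]^{-1}$, namely
\[
w(x)\le C(n,\diam\Omega)\,\|f^{+}\|_{L^{\infty}(\Omega)}^{\,n}\,\dist(x,\p\Omega)\bigl(1+|\log\dist(x,\p\Omega)|\bigr),\qquad x\in\Omega,
\]
and to prove this by comparing $w$ with suitable \emph{concave} barriers for the linearized Monge--Amp\`ere operator $\mathcal{L}_{u}v:=U^{ij}D_{ij}v$. Put $F:=\|f^{+}\|_{L^{\infty}(\Omega)}$. From the first line of (\ref{AMCE}), $\mathcal{L}_{u}w=-f\ge-F$ in $\Omega$ with $w=0$ on $\p\Omega$; since $\det U=(\det D^{2}u)^{n-1}=w^{-(n-1)}$, the Aleksandrov--Bakelman--Pucci estimate for $\mathcal{L}_{u}$ gives
\[
\sup_{\Omega}w\le C_{n}\diam(\Omega)\,\|F(\det U)^{-1/n}\|_{L^{n}(\Omega)}=C_{n}\diam(\Omega)F\Bigl(\int_{\Omega}w^{\,n-1}\Bigr)^{1/n}\le C_{n}\diam(\Omega)|\Omega|^{1/n}F(\sup_{\Omega}w)^{(n-1)/n}.
\]
Absorbing the last factor yields the crude global bound $\sup_{\Omega}w\le W_{0}:=C(n,\diam\Omega)F^{n}$, which already proves (\ref{Awlog}) for $x$ at a fixed distance from $\p\Omega$.

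The key to moving this to the boundary is one elementary inequality: if $h$ is concave, then by the arithmetic--geometric mean inequality applied to the positive semidefinite matrices $U$ and $-D^{2}h$,
\[
\mathcal{L}_{u}h=-\trace\bigl(U(-D^{2}h)\bigr)\le-n(\det U)^{1/n}\bigl(\det(-D^{2}h)\bigr)^{1/n}=-n(\det D^{2}u)^{(n-1)/n}\bigl(\det(-D^{2}h)\bigr)^{1/n}.
\]
Hence, once an upper bound $w\le\Psi$ is in hand, any concave $h$ with $h\ge0$ on $\p\Omega$ and $\det(-D^{2}h)\ge F^{n}\Psi^{\,n-1}/n^{n}$ in $\Omega$ satisfies $\mathcal{L}_{u}h\le-F\le-f=\mathcal{L}_{u}w$, so $\mathcal{L}_{u}(h-w)\le0$; since $\mathcal{L}_{u}$ is uniformly elliptic in the interior with no zeroth order term and $h-w\ge0$ on $\p\Omega$, the minimum principle forces $w\le h$ in $\Omega$. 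I will take $h=-\phi$ with $\phi$ the convex solution of $\det D^{2}\phi=F^{n}\Psi^{\,n-1}/n^{n}$ (in $\Omega$, or in a convex neighbourhood of a boundary point when $\Omega$ is not convex) with $\phi=0$ on $\p\Omega$; then $h=|\phi|\ge0$ is concave, and its size near $\p\Omega$ is controlled by the H\"older and log-Lipschitz estimates of this paper.

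Running this scheme with $\Psi\equiv W_{0}$ the right-hand side is a constant of size $\sim F^{n^{2}}$, and Caffarelli's estimate (\ref{uqzero}) gives $w\le|\phi|\le C(n,\diam\Omega)F^{n}\dist^{2/n}(\cdot,\p\Omega)$. Iterating, from $w\le C_{k}F^{n}\dist^{\beta_{k}}(\cdot,\p\Omega)$ one solves $\det D^{2}\phi_{k+1}\sim C_{k}^{\,n-1}F^{n^{2}}\dist^{(n-1)\beta_{k}}(\cdot,\p\Omega)$ and invokes the weighted form of (\ref{uqzero}) — obtained by running (\ref{uqzero}) on the dyadic strips $\{\dist(\cdot,\p\Omega)\approx2^{-j}\}$, or by the supersolution technique of Section \ref{minusp_Sect} — to get $w\le C_{k+1}F^{n}\dist^{\beta_{k+1}}(\cdot,\p\Omega)$ with $\beta_{k+1}=\tfrac{(n-1)\beta_{k}+2}{n}$ and $C_{k+1}=c(n,\diam\Omega)C_{k}^{(n-1)/n}$. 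Since $\beta_{k}=2-2\bigl(\tfrac{n-1}{n}\bigr)^{k}\uparrow2$ while the $C_{k}$ stay bounded in terms of $n$ and $\diam\Omega$, after finitely many steps $(n-1)\beta_{k}\ge n-2$. A final application of the comparison with $\det D^{2}\phi=M\dist^{n-2}(\cdot,\p\Omega)$, $M\sim F^{n^{2}}$, combined with Proposition \ref{Alek2d}, yields $w\le C(n,\diam\Omega)F^{n}\dist(\cdot,\p\Omega)(1+|\log\dist(\cdot,\p\Omega)|)$ near $\p\Omega$; together with the crude bound far from $\p\Omega$ this is the desired estimate, and inverting it gives (\ref{Awlog}).

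I expect the principal obstacle to be exactly the tension the barrier must accommodate: forcing $\mathcal{L}_{u}h\le-F$ through the AM--GM step needs $\det(-D^{2}h)$ bounded below, which makes $|h|$ relatively large near $\p\Omega$, whereas the conclusion demands $|h|$ as small as $\dist(1+|\log\dist|)$ there. This mismatch is precisely what forces the iteration: each round exploits the vanishing of the model density $\dist^{\gamma}$ (hence the improved boundary exponent $(\gamma+2)/n$) inherited from the bound on $w$ already proved, and only at the critical exponent $\gamma=n-2$ does the logarithm of Proposition \ref{Alek2d} enter. A secondary, bookkeeping, issue is to check that every constant remains of the form $C(n,\diam\Omega)$ — in particular that $C_{k+1}=c(n,\diam\Omega)C_{k}^{(n-1)/n}$ converges because $(n-1)/n<1$ — and that, where the auxiliary Monge--Amp\`ere right-hand sides are only continuous, a harmless mollification restores enough regularity for the interior minimum principle to apply.
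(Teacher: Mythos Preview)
Your approach is essentially the paper's: both proofs open with the ABP estimate to get $w\le C(n,\diam\Omega)F^{n}$, then bootstrap a barrier comparison through the matrix inequality $\trace(AB)\ge n(\det A\det B)^{1/n}$, and close with the log-Lipschitz barrier of Proposition~\ref{Alek2d}. The only difference is packaging---the paper builds the explicit local barriers $v_\alpha(x)=x_n^\alpha(|x'|^2-C_\alpha)$ (and the logarithmic $v_1$) at each boundary point, which gives the additive increment $\beta\mapsto\beta+\tfrac{1}{n}$, whereas you solve global auxiliary Monge--Amp\`ere problems and invoke the weighted H\"older estimate (Lemma~\ref{subdist}) and Proposition~\ref{Alek2d} as black boxes, giving the recursion $\beta\mapsto\tfrac{(n-1)\beta+2}{n}$; since those lemmas are themselves proved with exactly the barriers $v_\alpha$, the two arguments are the same at the level of ideas.
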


The rest of the note is organized as follows.  In Section \ref{MA_sect}, we recall the notion of Aleksandrov solutions to the Monge-Amp\`ere equation, and an explicit formula for the surface tension on the triangle.  In Section \ref{C1n_sect}, we prove Theorem \ref{C1nthm}.
 In Section \ref{minusp_Sect}, we prove Theorem \ref{minusp}. In Section \ref{subsup}, we give the proofs of  Proposition \ref{Alek2d} and Theorem \ref{wphi}.  In Section \ref{sfrem_sect}, we show in Proposition \ref{Dsiglog} that the gradient of general surface tensions behaves similarly to that of the lozenge tiling model. 
\section{Aleksandrov solutions, and surface tension}
\label{MA_sect}
In this section, we recall the notion of Aleksandrov solutions to the Monge-Amp\`ere equation, Aleksandrov estimates, and an important explicit formula for the surface tension on the triangle.
\subsection{The Monge-Amp\`ere measure}
  Let $u:\Omega\rightarrow\R$ be a convex function defined on a bounded convex domain $\Omega\subset\R^n$.

 Let $Mu$
be the Monge-Amp\`ere measure associated with $u$; see \cite[Definition 2.1 and Theorem 2.3]{F}, \cite[Theorem 1.1.13]{G}, and \cite[Definition 3.4]{LMT}. It is defined by
$$Mu(E) = |\p u(E)|~\text{where } \p u(E) = \bigcup_{x\in E} \p u(x),~\text{for each Borel set } E\subset\Omega$$
where
$$
\partial u (x):=\{p\in \R^{n}\,:\, u(y)\ge u(x)+p\cdot (y-x)\quad \text{for all}\, y \in \Omega\}.
$$
Note that, when $u\in C^2 (\Omega)$, we have $\p u(x)=\{Du(x)\}$ for all $x\in\Omega$, and $Mu$ is absolutely continuous with respect to the Lebesgue measure, and it has density $\det D^2 u$. In this note, we use $\det D^2 u$ to denote $Mu$ for $u$ being merely convex.

\begin{defn}[Aleksandrov solutions]\label{Aleksol} Given a Borel measure \(\mu\) on \(\Omega\), a convex
function \(u:\Omega \to \R\) is called an \emph{Aleksandrov solution} to the Monge-Amp\`ere equation
\[
\det D^{2} u =\mu,
\]
if $\mu=Mu$ as Borel measures.  When $\mu=f~dx $ we will simply say that $u$ solves 
\begin{equation*}
\det D^2 u=f
\end{equation*}
and this is the notation we use in this note.
Similarly, when writing  $\det D^2 u \geq  \lambda \ (\leq \Lambda)$ we mean that $Mu \ge \lambda \,dx \ (\leq \Lambda\,dx)$.
\end{defn}
  
 We recall the following comparison principle; see \cite[Theorem 2.10]{F}, \cite[Theorem 1.4.6]{G}, and \cite[Lemma 3.25]{LMT}.

\begin{lem}[Comparison principle]
\label{comp_prin}
 Let  $\Omega\subset\R^n$ be an open, bounded and convex domain. Let $u, v\in C(\overline{\Omega})$ be convex functions.
If $u\ge v$ on $\p \Omega$, and  
$$
\det D^{2}u \leq \det D^{2}v\quad \text{in}~\Omega, 
$$
then $u\ge v$ in $\Omega$.
\end{lem}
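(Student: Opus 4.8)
The plan is to deduce the comparison principle from a strict version obtained after a quadratic perturbation; the perturbation is essential, since the ``sliding plane'' argument applied directly to the set $\{u<v\}$ only produces $Mu=Mv$ there, which is not yet a contradiction. Fix a point $a\in\Omega$ and a radius $R>\sup_{\ov\Omega}|x-a|$, so that $\ov\Omega\subset B_R(a)$, and for $\e>0$ introduce the convex function
\[
w_\e(x):=v(x)+\e\bigl(|x-a|^2-R^2\bigr).
\]
Then $w_\e<v$ on $\ov\Omega$; in particular $w_\e<v\le u$ on $\p\Omega$. Moreover, since $D^2w_\e=D^2v+2\e I$ in the smooth case and $\det(A+2\e I)\ge\det A+(2\e)^n$ for symmetric $A\ge 0$, an approximation of $v$ by smooth convex functions (for which Monge-Amp\`ere measures converge weakly) gives
\[
Mw_\e\ \ge\ Mv+(2\e)^n\,dx\ \ge\ Mu+(2\e)^n\,dx\qquad\text{as Borel measures on }\Omega,
\]
the last step by the hypothesis $\det D^2u\le\det D^2v$. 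Because $\norm{w_\e-v}_{L^\infty(\ov\Omega)}\to 0$ as $\e\to 0^+$, it suffices to prove that $u\ge w_\e$ in $\Omega$ for each fixed $\e>0$.

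Suppose not, and set $A:=\{x\in\Omega:u(x)<w_\e(x)\}$, a non-empty open set. Since $u-w_\e\in C(\ov\Omega)$ and $u-w_\e>0$ on the compact set $\p\Omega$, the set $\ov A$ is compactly contained in $\Omega$, and $u=w_\e$ on $\p A$. I would then invoke the following sub-lemma: \emph{if $f,g\in C(\ov\Omega)$ are convex and $O:=\{f<g\}$ has $\ov O\Subset\Omega$, then $\p g(O)\subseteq\p f(O)$, hence $Mg(O)\le Mf(O)$.} Applying it with $f=u$, $g=w_\e$, $O=A$ gives $Mw_\e(A)\le Mu(A)$; but $Mw_\e(A)\ge Mu(A)+(2\e)^n|A|>Mu(A)$, since $A$, being a non-empty open set, has positive Lebesgue measure. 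This contradiction shows $A=\emptyset$, i.e.\ $u\ge w_\e$ in $\Omega$; letting $\e\to 0^+$ yields $u\ge v$ in $\Omega$.

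To prove the sub-lemma, I would take $x_0\in O$ and $p\in\p g(x_0)$, and consider the supporting affine function $\ell(y):=g(x_0)+p\cdot(y-x_0)$, which satisfies $\ell\le g$ on $\ov\Omega$. Lowering it until it first touches $f$, let $t:=\min_{\ov\Omega}(f-\ell)$, attained at some $y_0\in\ov\Omega$. Since $f(x_0)-\ell(x_0)=f(x_0)-g(x_0)<0$ we have $t<0$, whence $f(y_0)=\ell(y_0)+t<\ell(y_0)\le g(y_0)$; thus $y_0\in\{f<g\}=O\Subset\Omega$ is an interior point. By the definition of $t$, $f(y)\ge\ell(y)+t=f(y_0)+p\cdot(y-y_0)$ for every $y\in\Omega$, i.e.\ $p\in\p f(y_0)\subseteq\p f(O)$. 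Hence $\p g(O)\subseteq\p f(O)$, and taking Lebesgue measure gives $Mg(O)=|\p g(O)|\le|\p f(O)|=Mf(O)$. The only genuinely technical points are the quantitative Monge-Amp\`ere gain $Mw_\e\ge Mv+(2\e)^n\,dx$ for merely convex $v$ (handled by the smooth approximation together with the Minkowski determinant inequality) and the measurability of the subdifferential images $\p f(O)$ (a standard fact); once the perturbation is in place, I expect the sliding-plane step to be routine.
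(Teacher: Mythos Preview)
Your proof is correct and follows the standard textbook route. The paper does not give its own proof of this lemma; it simply cites Figalli \cite[Theorem 2.10]{F}, Guti\'errez \cite[Theorem 1.4.6]{G}, and Le--Mitake--Tran \cite[Lemma 3.25]{LMT}, and your perturbation-plus-Aleksandrov-lemma argument is exactly what one finds there. One minor simplification: the inequality $Mw_\e\ge Mv+(2\e)^n\,dx$ follows directly from the superadditivity $M(f+g)\ge Mf+Mg$ for convex $f,g$ (this is Lemma~\ref{detuv} later in the paper, citing \cite[Lemma 2.9]{F}), so you need not pass through smooth approximation.
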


The classical Aleksandrov maximum principle for the Monge-Amp\`ere equation (see \cite[Theorem 2.8]{F}, \cite[Theorem 1.4.2]{G} and \cite[Theorem 3.20]{LMT}) states that: if $u\in C(\overline{\Omega})$ is a convex function satisfying
\begin{equation*} Mu(\Omega)=\int_\Omega \det D^2 u~dx:=m<\infty,\quad\text{and }  u=0 \quad\text{on }\p\Omega,\end{equation*}
then
\begin{eqnarray}
\label{AlekMu}
|u(x)| &\leq& C(n, \diam(\Omega))[\dist(x, \p\Omega)]^{1/n} \Big(\int_\Omega \det D^2 u dx\Big)^{1/n}\nonumber \\ &\leq& C(n, m, \diam(\Omega))[\dist(x, \p\Omega)]^{1/n}.
\end{eqnarray}
In particular, by the convexity of $u$, one finds that $u\in C^{0, 1/n}(\overline{\Omega})$. Theorem \ref{C1nthm} asserts the optimality of this global $C^{\frac{1}{n}}$ regularity in all dimensions $n\geq 2$.

As mentioned in the Introduction, Caffarelli's estimate (\ref{uqzero}) sharpens (\ref{AlekMu}) in the special case $\det D^2 u\leq 1$. In two dimensions, we can further sharpen Caffarelli's estimate using Proposition \ref{Alek2d}.

 \subsection{Surface tension for the lozenge tiling model} 
\label{lozenge_sect}
Let $T$ be the triangle with vertices at $(0,0),  (1,0)$ and $(0, 1)$, and let $T^0$ be the interior of $T$. Then, there is a remarkable formula for 
 the solution
  to
  \begin{equation}
 \label{STET}
 \left\{
 \begin{alignedat}{2}
   \det D^{2} \sigma_T~&= 1 \h~&&\text{in} ~T^o, \\\
\sigma_T &=0\h~&&\text{on}~\p T.
 \end{alignedat}
 \right.
\end{equation}
The solution is called the surface tension in the study of the lozenge tiling model in the works \cite{CKP, KO, KOS}; see 
also \cite[Theorem 8]{K} and \cite[Lecture 12]{Go}. It is given by
  \begin{equation}
  \label{sigL}
  \sigma_T (x) =-\frac{1}{\pi^2} (\mathscr{L} (\pi x_1)+ \mathscr{L} (\pi x_2) + \mathscr{L} (\pi (1-x_1-x_2)))\quad\text{for } x=(x_1, x_2)\in T^o
  \end{equation}
  where
  \[\mathscr{L} ( \theta)=-\int_0^\theta \log |2\sin u| d u\]
  is the Lobachevsky function.
  
  One notes that
  \begin{equation}
  \label{Dsig}
  D\sigma_T (x) =\frac{1}{\pi} \Big(\log \Big(\frac{\sin (\pi x_1)}{\sin (\pi(x_1+ x_2))}, \log \Big(\frac{\sin (\pi x_2)}{\sin (\pi(x_1+ x_2))}\Big)\Big).
  \end{equation}
 A direct calculation shows that $\det D^2\sigma_T=1$ in $T^o$. To verify the boundary condition, we use the fact that \[\mathscr{L}(\pi)=-\int_0^\pi \log |2\sin u| d u=0;\]
  see \cite[p. 161]{Ahlf}. From this, we see that $\mathscr{L}$ is odd, and $\pi$-periodic. The boundary condition in (\ref{STET}) can be then easily verified.
 
\section{Optimality of global  $C^{\frac{1}{n}}$ regularity and singular Monge-Amp\`ere }
\label{C1n_sect}
In this section, we prove Theorem \ref{C1nthm}. 
Interestingly, our construction of the bounded domain and the convex function in Theorem \ref{C1nthm} is inspired by the study of 
 the optimal global H\"older regularity  of the unique convex solution $u\in C^{\infty}(\Omega)\cap C(\overline{\Omega})$ to singular Monge-Amp\`ere equations of the form (\ref{uq_eq}) with $q=-p<0$, where $p>0$. 
 \iffalse
\begin{equation}
 \label{SMAp}
 \left\{
 \begin{alignedat}{2}
   \det D^{2} u~&= |u|^{-p} \h~&&\text{in} ~\Omega, \\\
u &=0\h~&&\text{on}~\p \Omega
 \end{alignedat}
 \right.
\end{equation}
where $p>0$. 
\fi
In \cite[Theorem 1.1]{LDCDS}, it was shown that $u\in C^{\frac{2}{n+p}}(\overline{\Omega})$ and this global H\"older regularity is optimal for all $p\geq 1$ (it is in fact optimal for all $p>0$ by Theorem \ref{minusp}). For any $\alpha>1/n$, we can choose $1\leq p<n$ such that $\alpha>\frac{2}{n+ p}$, and the construction of the convex supersolutions to (\ref{uq_eq}) in \cite{LDCDS} gives a candidate for our desired
function $v$. Here, we only need $\det D^2 v\in L^1(\Omega)$ but do not require $\det D^2 v\leq |v|^{-p}$ in $\Omega$ so our construction actually works for all $p>0$.

For $x=(x_1, \cdots, x_n)\in\R^n$, let us write $x=(x', x_n)$, where $x'=(x_1, \cdots, x_{n-1})\in\R^{n-1}$. Our basic construction is the following:
\begin{lem} 
\label{pnlem} Let $n\geq 2$. Let $0< p<n$, and 
 $$\Omega=\{(x', x_n): |x'|<1, 0<x_n < 1- |x'|^2\}.$$
Then  the function $$w= x_n -x_n^{\frac{2}{n+ p}} (1-|x'|^2)^{\frac{n+ p-2}{n+ p}}$$ has the following properties:
\begin{enumerate}
 \item[(i)] $w$ is smooth, convex in $\Omega$ with $w=0$ on $\p\Omega$,
 \item[(ii)] $\det D^2 w\in L^1(\Omega)$,
 \item[(iii)] $w\not \in C^{\alpha} (\overline{\Omega})$ for any $\alpha>\frac{2}{n+ p}$.
 \end{enumerate}
\end{lem}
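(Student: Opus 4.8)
The plan is to verify (i)--(iii) by direct computation. Write $\kappa=\frac{2}{n+p}$ and $\rho=\rho(x')=1-|x'|^{2}$, so that $w=x_{n}-x_{n}^{\kappa}\rho^{1-\kappa}$; since $n\ge 2$ and $p>0$ we have $n+p>2$, hence $0<\kappa<1$ and $0<1-\kappa<1$. Observe first that $\Omega=\{x_{n}>0\}\cap\{|x'|^{2}+x_{n}<1\}$ is a bounded convex domain, with boundary the flat disk $\{x_{n}=0,\ |x'|\le 1\}$ together with the paraboloid cap $\{x_{n}=1-|x'|^{2},\ |x'|\le 1\}$. On $\Omega$ one has $0<x_{n}<\rho$, so $x_{n}$ and $\rho$ are positive and smooth there and $w\in C^{\infty}(\Omega)$; on the flat disk $w=0$ trivially, while on the paraboloid cap $x_{n}^{\kappa}\rho^{1-\kappa}=\rho^{\kappa}\rho^{1-\kappa}=\rho=x_{n}$, so $w=0$ there as well, which gives the boundary condition in (i). For convexity I would use that the weighted geometric mean $F(a,b)=a^{\kappa}b^{1-\kappa}$ is concave and nondecreasing in each variable on $(0,\infty)^{2}$ (an elementary $2\times 2$ Hessian check) and compose it with the map $(x',x_{n})\mapsto(x_{n},\rho(x'))$, whose first component is linear and second is concave; this makes $x_{n}^{\kappa}\rho^{1-\kappa}$ concave on $\Omega$, hence $w=x_{n}-x_{n}^{\kappa}\rho^{1-\kappa}$ convex. (Strict convexity will also drop out of the Hessian computation below.)

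The heart of the matter is the computation of $\det D^{2}w$ for (ii). Writing $h=x_{n}^{\kappa}\rho^{1-\kappa}=f(x_{n})g(x')$ with $f(t)=t^{\kappa}$ and $g=\rho^{1-\kappa}$, the Hessian has the block form $D^{2}h=\left(\begin{smallmatrix} f\,D_{x'}^{2}g & f'D_{x'}g\\ f'(D_{x'}g)^{T} & f''g\end{smallmatrix}\right)$, where $D_{x'}$ is the gradient in $x'$. Because $\rho$ is quadratic, $D_{x'}g$ is a scalar multiple of $x'$ and $D_{x'}^{2}g$ is a rank-one perturbation of a multiple of the identity, so a Schur-complement expansion of this block determinant, combined with the rank-one determinant and inverse (Sherman--Morrison) identities, collapses everything to $\det D^{2}w=(-1)^{n}\det D^{2}h=2^{n-1}\kappa(1-\kappa)^{n}\,x_{n}^{\kappa n-2}\rho^{1-\kappa n}$; in terms of $p$ this reads $\det D^{2}w=c_{n,p}\,x_{n}^{-\frac{2p}{n+p}}(1-|x'|^{2})^{\frac{p-n}{n+p}}$ with $c_{n,p}>0$. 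Now integrate: for fixed $x'$, $\int_{0}^{\rho}x_{n}^{-2p/(n+p)}\,dx_{n}$ converges precisely because $\frac{2p}{n+p}<1\iff p<n$, and equals $\frac{n+p}{n-p}\rho^{\frac{n-p}{n+p}}$; this power of $\rho$ cancels exactly against the remaining factor $\rho^{\frac{p-n}{n+p}}$, so $\int_{\Omega}\det D^{2}w\,dx=c_{n,p}\,\frac{n+p}{n-p}\int_{|x'|<1}dx'<\infty$, which is (ii).

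For (iii), restrict $w$ to the vertical segment $x=(0',t)$ with $0<t<1$, which lies in $\Omega$ and tends to the boundary point $(0',0)$, where $w=0$. There $w(0',t)=t-t^{\kappa}$, and since $\kappa<1$ we get $|w(0',t)|=t^{\kappa}-t\ge\tfrac12 t^{\kappa}$ for all sufficiently small $t>0$. If $w\in C^{\alpha}(\overline{\Omega})$ for some $\alpha>\kappa$, this forces $\tfrac12 t^{\kappa}\le C t^{\alpha}$, i.e.\ $t^{\kappa-\alpha}\le 2C$, which is impossible as $t\to 0^{+}$; hence $w\notin C^{\alpha}(\overline{\Omega})$ for any $\alpha>\frac{2}{n+p}$. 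The only genuine obstacle is the determinant computation in (ii): one must carry the Schur-complement and rank-one algebra through carefully enough to see that $\det D^{2}w$ factors as a pure power of $x_{n}$ times a pure power of $\rho$, and that in the iterated integral these two powers are exactly balanced, with no slack in the exponents. Everything else -- the boundary values, the convexity of $w$ and of $\Omega$, and the H\"older-type lower bound -- is elementary.
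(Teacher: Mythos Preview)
Your proof is correct and follows the same overall strategy as the paper: compute $\det D^2 w$ explicitly as a product of powers of $x_n$ and $\rho$, integrate to get (ii), and test (iii) along the segment $(0',t)$. Your final determinant formula $2^{n-1}\kappa(1-\kappa)^n x_n^{\kappa n-2}\rho^{1-\kappa n}$ and the integration (with the exact cancellation of the $\rho$-powers) match the paper verbatim once one sets $a=\kappa$, $b=1-\kappa$.

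The differences are purely in computational technique. The paper exploits the radial symmetry in $x'$ by passing to cylindrical coordinates, where $D^2 w$ becomes block-diagonal with $(n-2)$ copies of $w_r/r$ and a single $2\times 2$ block; this makes the determinant immediate and the convexity can be read off from the sign of each block entry. Your route---Schur complement on the $(n-1)\times(n-1)$ block $fD_{x'}^2 g$ followed by Sherman--Morrison---is equivalent but somewhat heavier, since $D_{x'}^2 g$ is itself a rank-one perturbation of a multiple of the identity, so one effectively applies the matrix-determinant lemma twice. On the other hand, your convexity argument via the concavity of the weighted geometric mean $F(a,b)=a^{\kappa}b^{1-\kappa}$ composed with the concave map $(x',x_n)\mapsto (x_n,\rho(x'))$ is more conceptual than the paper's and avoids needing the Hessian for that step; the paper simply reads convexity from its explicit cylindrical Hessian.
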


\begin{proof} We use the calculation carried out in the proof of \cite[Lemma 2.4]{LDCDS}.  For $x=(x', x_n)$, we denote $r=|x'|$. Let
$$a= \frac{2}{n+ p},\quad \text{and }b= 1-a=\frac{n+ p-2}{n+ p}.$$
Then
$$w(x)=x_n -x_n^a (1-r^2)^b.$$
We calculate
\begin{align*}
w_{r}&= 2b x_n^a (1-r^2)^{b-1} r\\
w_{rr}&= 2b x_n^a (1-r^2)^{b-2}[1-(2b-1) r^2]\\
w_{x_n}&=1 -a x_n^{a-1}(1-r^2)^b\\
w_{x_n x_n}&= a(1-a) x_n^{a-2}(1-r^2)^b\\
w_{x_n r} &=2ab x_n^{a-1} (1-r^2)^{b-1} r.
\end{align*}
In suitable coordinate systems, such as cylindrical in $x'$, the Hessian of $w$ has the following form
\begin{equation*}
 D^2 w =
 \begin{pmatrix}
  \frac{w_r}{r} & 0 & \cdots & 0 & 0 \\
  0 & \frac{w_r}{r} & \cdots & 0 & 0 \\
  \vdots  & \vdots  & \ddots & \vdots & \vdots  \\
  0 & 0 & \cdots & w_{rr} & w_{r x_n}\\
  0 & 0 & \cdots & w_{r x_n} & w_{x_n x_n}
 \end{pmatrix}.
\end{equation*}
We have
\begin{eqnarray*}\det D^2 w&=&(\frac{w_r}{r})^{n-2} [w_{x_n x_n}w_{rr}- w^2_{x_n r}]\\&=&   (2b)^{n-1} x_n^{na-2}(1-r^2)^{n(b-1)} a [1-a + (1-2b-a)r^2]\\&=&
ab(2b)^{n-1} x_n^{na-2}(1-r^2)^{1-na}.
\end{eqnarray*}
Since $0<a, b<1$ and $0\leq r<1$,  we deduce that $w$ is smooth and convex in $\Omega$ with $w=0$ on $\p\Omega$. This confirms $(i)$. For $(ii)$, we note  from $0<p<n$ that \[na-1>0.\]
Thus
\begin{eqnarray*}
\int_{\Omega} \det D^2 w \,dx&=&\int_{|x'|<1}\int_0^{1-|x'|^2} ab(2b)^{n-1} x_n^{na-2}(1-|x'|^2)^{1-na} \,dx_n dx'\\&=&
\int_{|x'|<1}\frac{ ab(2b)^{n-1}}{na-1} (1-|x'|^2)^{na-1} (1-|x'|^2)^{1-na} \,dx'\\
&=& \int_{|x'|<1}\frac{ ab(2b)^{n-1}}{na-1} \,dx'<\infty.
\end{eqnarray*}
Therefore, $\det D^2 w\in L^1(\Omega)$ and this proves $(ii)$.

Finally, we prove $(iii)$. For $x=(0, x_n)$, we have
\[|w(0, x_n)|= x_n^{\frac{2}{n+ p}}- x_n\geq x_n^{\frac{2}{n+ p}}/2 \]
for $x_n$ small, depending only on $n$ and $p$. This shows that $w\not \in C^{\alpha} (\overline{\Omega})$ for any $\alpha>\frac{2}{n+ p}$.
\end{proof}
\begin{rem}
\label{sup_rem}
If $1\leq p<n$, then as in \cite[Lemma 2.4]{LDCDS}, we can find a constant $C(n, p)>0$ such that $\bar w:= Cw$ is a supersolution of (\ref{uq_eq}) where $q=-p$, that is
\[\det D^2 \bar w\leq  |\bar w|^{-p}\quad\text{in }\Omega,\quad \text{and }\bar w=0 \quad\text{on }\p \Omega.\]
\end{rem}
\begin{proof}[Proof of Theorem \ref{C1nthm}] For $n\geq 2$ and $1/n<\alpha<1$, let us choose $p$ such that \[\max\{2/\alpha-n,1\}<p<n.\]
Then
\[\alpha>\frac{2}{n+ p}.\]
Let $\Omega$ and $w$ be as in Lemma \ref{pnlem}. Then, $v=w\in C(\overline{\Omega})\cap C^{\infty}(\Omega)$ is a convex function with $v=0$ on $\p\Omega$ and
$\det D^2 v\in L^1(\Omega)$, but $v\not \in C^{\alpha} (\overline{\Omega})$.

By Remark \ref{sup_rem}, there is $C'(n, p)>0$ such that $\det D^2 v\leq C'(n, p)|v|^{-p}$ in $\Omega$.
\end{proof}

\section{Proof of Theorem \ref{minusp}}
\label{minusp_Sect}
In this section, we prove Theorem \ref{minusp}. We write a typical point $x=(x_1,\cdots, x_n)\in \R^n$ as $x= (x', x_n)$ where $x'=(x_1, \cdots, x_{n-1})$.

To prove the sharp estimate in Theorem \ref{minusp} for the case $q<0$, we will use the following  smooth, possibly non-convex supersolutions.
\begin{lem}[Possibly non-convex supersolution for negative power]
\label{suplem}
Assume $p>0$. 
Let $$\Omega_1:=\{(x', x_n): |x'|<1, 0<x_n < (1- |x'|^2)^{\frac{n}{n+ p-2}}\}.$$
Then the function $$w=\Big [x_n -x_n^{\frac{2}{n+ p}} (1-|x'|^2)^{\frac{n}{n+ p}}\Big]/2$$ is smooth in $\Omega_1$ and satisfies
$$\det D^2 w\leq |w|^{-p}\quad\text{in } \Omega_1,\quad\text{and } w=0\quad\text{on }\p\Omega_1.$$
\end{lem}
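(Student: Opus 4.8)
The plan is to carry out the same Hessian computation as in the proof of Lemma~\ref{pnlem}, now with the exponents $a=\frac{2}{n+p}$ and $c=\frac{n}{n+p}$ (so $0<a<1$ and $0<c<1$, as $n+p>2$), and then to exploit the two elementary identities $a(n+p)=2$ and $c(n+p)=n$. Throughout write $r=|x'|$ and $\phi(x)=x_n-x_n^a(1-r^2)^c$, so that $w=\phi/2$. Since on $\Omega_1$ we have $x_n>0$ and $1-|x'|^2>0$, the function $w$ is a product of smooth functions of $x_n$ and of $|x'|^2$; hence $w\in C^\infty(\Omega_1)$.

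Next I would record the boundary values and the sign of $w$. On the flat face $\{x_n=0\}$ one has $w=0$ since $a>0$; on the remaining part of $\partial\Omega_1$, where $x_n=(1-r^2)^{\frac{n}{n+p-2}}$, an exponent count using $\frac{na}{n+p-2}+c=\frac{n}{n+p-2}$ gives $x_n^a(1-r^2)^c=x_n$, so again $w=0$. For the sign, since $\frac{c}{1-a}=\frac{n}{n+p-2}$, for $x_n>0$ the condition $x_n<(1-r^2)^{\frac{n}{n+p-2}}$ is equivalent to $x_n^{1-a}<(1-r^2)^c$, i.e.\ to $x_n<x_n^a(1-r^2)^c$; hence $w<0$ throughout $\Omega_1$ and
\[
0<|w|=\tfrac12\bigl(x_n^a(1-r^2)^c-x_n\bigr)<\tfrac12\,x_n^a(1-r^2)^c ,\qquad\text{so}\qquad |w|^{-p}>2^p\bigl(x_n^a(1-r^2)^c\bigr)^{-p}.
\]

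Then I would compute the Hessian. Exactly as in Lemma~\ref{pnlem}, in cylindrical coordinates $D^2\phi$ is block diagonal, with $n-2$ entries equal to $\phi_r/r=2c\,x_n^a(1-r^2)^{c-1}$ and a $2\times2$ block in the $(r,x_n)$ variables; the same algebra yields
\[
\det D^2 w=2^{-n}\det D^2\phi=\tfrac12\,a\,c^{\,n-1}\,x_n^{\,na-2}(1-r^2)^{\,n(c-1)}\bigl[(1-a)-(2c+a-1)r^2\bigr].
\]
Since $na-2=-ap$ and $n(c-1)=-cp$, we have $x_n^{\,na-2}(1-r^2)^{\,n(c-1)}=\bigl(x_n^a(1-r^2)^c\bigr)^{-p}$, whence
\[
\det D^2 w=\tfrac12\,a\,c^{\,n-1}\bigl(x_n^a(1-r^2)^c\bigr)^{-p}\,B,\qquad B:=(1-a)-(2c+a-1)r^2 .
\]

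It remains to estimate $B$ for $0\le r<1$. If $B\le0$ then $\det D^2 w\le0<|w|^{-p}$. If $B>0$, then either $2c+a-1\ge0$, so $0<B\le1-a<1$ and $\tfrac12\,a\,c^{\,n-1}B<\tfrac12$; or $2c+a-1<0$, so $B<(1-a)+(1-2c-a)=2(1-a-c)$ and $\tfrac12\,a\,c^{\,n-1}B<a\,c^{\,n-1}(1-a-c)<a<1$. In all cases $\tfrac12\,a\,c^{\,n-1}B<1$, hence $\det D^2 w<\bigl(x_n^a(1-r^2)^c\bigr)^{-p}<|w|^{-p}$, which is the assertion. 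The one point requiring care is the exponent bookkeeping, so that the $x_n$- and $(1-r^2)$-powers recombine to exactly $\bigl(x_n^a(1-r^2)^c\bigr)^{-p}$; the possible non-convexity of $w$ (the sign of $B$, and hence of $\det D^2 w$, can be negative near the flat face when $p<2$) is harmless and in fact only makes the inequality easier.
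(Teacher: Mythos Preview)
Your proof is correct and follows essentially the same route as the paper. The paper isolates the auxiliary function $v=-\tfrac12 x_n^{a}(1-r^2)^{c}$, notes $\det D^2 w=\det D^2 v$, verifies $\det D^2 v\le |v|^{-p}$ via the same exponent identities $(n+p)a=2$, $(n+p)c=n$, and then uses $|w|<|v|$ to pass to $|w|^{-p}$; you compute $\det D^2 w$ directly and compare with $(x_n^a(1-r^2)^c)^{-p}$ before invoking $|w|<\tfrac12 x_n^a(1-r^2)^c$---the bookkeeping differs slightly but the computation and the idea are identical. One cosmetic remark: your parenthetical locating the non-convexity ``near the flat face'' is imprecise, since $B$ depends only on $r$ and becomes negative near $r=1$ (the lateral boundary) when $p<2$; this has no effect on the argument.
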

\begin{proof} For $x=(x', x_n)$, we denote $r=|x'|$.
Let
$$v(x)= -Cx_n^a (1-r^2)^b$$
where $0<a, b<1$ and $C>0$. Then, from the computations in \cite[Lemma 2.4]{LDCDS}, we have
\begin{eqnarray*}\det D^2 v&=&(\frac{v_r}{r})^{n-2} [v_{x_n x_n}v_{rr}- v^2_{x_n r}]\\&=&  C^n (2b)^{n-1} x_n^{na-2}(1-r^2)^{n(b-1)} a [1-a + (1-2b-a)r^2].
\end{eqnarray*}
We would like to have $$\det D^2 v \leq |v|^{-p}= C^{-p} x_n^{-ap} (1-r^2)^{-bp}$$
which is equivalent to
\begin{equation}
\label{abC}
C^{n+p} (2b)^{n-1} x_n^{(n+ p)a-2}(1-r^2)^{(n + p)b-n } a [1-a + (1-2b-a)r^2] \leq  1 \quad\text{for all } r<1.
\end{equation}
We can choose
$$a= \frac{2}{n+ p},\quad b =\frac{n}{n+ p},\quad C= \frac{1}{2}.$$
Now, let
$$w= Cx_n + v= [x_n -x_n^a (1-r^2)^b]/2.$$
Then $w<0$ is smooth  in $\Omega_1$, and $w=0$ on $\p\Omega_1$. Moreover, since
$|w|= |v|-Cx_n,$
we have
$$\det D^2 w= \det D^2 v\leq |v|^{-p}=|Cx_n + |w||^{-p}\leq |w|^{-p}\quad\text{in }\Omega_1.$$
\end{proof}
\begin{rem} For $a$ and $b$ in the above proof, we have
\[1-a + (1-2b-a)r^2 \rightarrow \frac{2(p-2)}{n+p}\quad\text{when } r\rightarrow 1^{-}.\]
Thus the function $v$, or equivalently $w$ in Lemma \ref{suplem}, is convex only when $p\geq 2$. When $0<p<2$, $w$ is not convex. 
\end{rem}
For the case $0\leq q<n-2$, the solution $u$ to (\ref{uq_eq2}) is expected to grow like some power of the distance function to the boundary. The following lemma is the basic for our argument. 
\begin{lem}[Non-convex supersolution for positive power]
\label{supdist}
Let $\Omega$ be a bounded convex domain in $\R^n$ ($n\geq 3$)  such that there is a non-empty closed subset $\Gamma$  of its boundary $\p\Omega$ that lies on a hyperplane and $\Gamma$ contains an $(n-1)$-dimensional ball. 
Let $m,\delta>0$, and $0\leq \gamma<n-2$. 
Suppose that $u\in C(\overline{\Omega})\cap C^2(\Omega)$ is a convex solution  to
  \begin{equation*}
   \det D^{2} u~= f\h~\text{in} ~\Omega, \quad
u =0\h~\text{on}~\p \Omega,
\end{equation*}
where $f\in C(\overline{\Omega})$, $f\geq 0$, and $f$ satisfies \[f(x)\geq m \dist^{\gamma}(x, \p\Omega)\quad \text{for }x\in\Omega\quad\text{satisfying }\dist(x, \Gamma)\leq \delta.\]
 Then, for $x\in\Omega$ sufficiently close to the interior of $\Gamma$, we have 
 \[|u(x)| \geq c(n,m, \gamma,\delta, \Omega,\Gamma) \dist^{\frac{2+\gamma}{n}}(x, \p\Omega). \]
\end{lem}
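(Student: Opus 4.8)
The plan is to reduce to a local model problem near the interior of $\Gamma$ and apply Lemma \ref{suplem}-type comparison, except now with a supersolution tailored to the right-hand side $\dist^\gamma$. After a rigid motion I may assume $\Gamma$ lies in $\{x_n = 0\}$ and $\Omega \subset \{x_n > 0\}$, with a point $x_0 \in \Gamma$ whose relative interior contains an $(n-1)$-ball $B'_\rho(x_0') \times \{0\}$; near such points $\dist(x, \p\Omega) = \dist(x, \Gamma) = x_n$ for $x$ in a small neighborhood. I then aim to build, on a suitable half-ball-like domain $\Omega_1 = \{|x'| < \rho,\ 0 < x_n < \psi(|x'|)\}$ (with $\psi$ chosen, as in Lemma \ref{suplem}, so that the model supersolution vanishes on the curved part), a smooth function $v$ with
\begin{equation*}
\det D^2 v \le m\, x_n^{\gamma} \quad \text{in } \Omega_1, \qquad v = 0 \quad \text{on } \p\Omega_1 .
\end{equation*}
Motivated by the computation reproduced in the proof of Lemma \ref{suplem}, I would try $v(x) = -C x_n^{a}(1-r^2)^{b}/\text{(normalization)}$ with $r = |x'|$ and exponents $a, b \in (0,1)$ chosen so that the homogeneity in $x_n$ matches $\gamma$: the determinant of such a $v$ is a constant multiple of $x_n^{na-2}(1-r^2)^{n(b-1)}[\,1-a+(1-2b-a)r^2\,]$, so I set $na - 2 = \gamma$, i.e. $a = \tfrac{2+\gamma}{n}$, and pick $b$ so the bracket stays bounded and the $(1-r^2)$-power is harmless (e.g. $b$ close to $1$); the condition $0 \le \gamma < n-2$ is exactly what makes $a \in (0,1)$ and $na - 1 > 0$, mirroring the role of $0<p<n$ in Lemma \ref{pnlem}. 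Adding a linear term $C x_n$ as in Lemma \ref{suplem} restores the full boundary condition $v=0$ on the flat face while only decreasing $\det D^2 v$.

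The comparison step is where I must be slightly careful, since $v$ need not be convex when $\gamma$ is small — precisely the situation the paper's introduction flags. Here I would use the argument sketched in the remarks before Lemma \ref{suplem}: on $\Omega_1$ we have $\det D^2 u = f \ge m\,\dist^\gamma(\cdot,\p\Omega) = m\,x_n^\gamma \ge \det D^2 v$ (using $\dist(x,\Gamma)\le\delta$ on a small enough $\Omega_1$, after scaling $\Omega_1$ down so it sits inside the $\delta$-neighborhood of $\Gamma$ and inside the region where $\dist(\cdot,\p\Omega)=x_n$), and $u \le 0 = v$ on $\p\Omega_1$ — note $u \le 0$ on all of $\p\Omega_1$, part of which lies inside $\Omega$, because $u$ is convex, negative, and vanishes on $\p\Omega$. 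Since $u \in C^2(\Omega)$ is a genuine smooth subsolution of the equation $\det D^2(\cdot) = m x_n^\gamma$ in the interior, the comparison principle goes through directly: if $u - v$ had a negative interior minimum, then at that point $D^2 u \ge D^2 v$, forcing $\det D^2 u \ge \det D^2 v$ with $D^2 u \ge 0$, but we would need a strict inequality to close it, so I would first compare $u$ with $v - \e(\,|x|^2 - R^2\,)$ for small $\e>0$ (shrinking the determinant of the perturbed competitor further while keeping the boundary inequality), derive a contradiction at an interior minimum via the arithmetic–geometric inequality on the eigenvalues, and then let $\e \to 0$. This yields $u \le v$ in $\Omega_1$, hence $|u(x)| \ge |v(x)| \ge c\, x_n^{a} = c\, \dist^{\frac{2+\gamma}{n}}(x,\p\Omega)$ for $x$ near the interior of $\Gamma$, which is the claim.

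The main obstacle I anticipate is not the supersolution construction — that is a direct variant of the Lemma \ref{suplem}/Lemma \ref{pnlem} computation — but handling the geometry of $\Omega$ honestly: one must choose the model domain $\Omega_1$ small enough that (a) it is genuinely contained in $\Omega$, (b) on $\Omega_1$ the Euclidean distance to $\p\Omega$ coincides with $x_n$ (which needs the $(n-1)$-ball in $\Gamma$ to be in the relative interior, so the other boundary faces stay a fixed distance away), and (c) $\Omega_1$ lies within the $\delta$-neighborhood where the lower bound on $f$ holds — and then track how the constants depend on $n, m, \gamma, \delta, \Omega, \Gamma$ through the rescaling. The convexity-free comparison via the vanishing-viscosity-type $\e$-perturbation is the only genuinely delicate analytic point, and it is exactly the device the author advertises in the discussion following Theorem \ref{minusp}.
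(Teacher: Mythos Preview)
Your approach is essentially the paper's, and correct in outline. Two points where the paper's execution is cleaner. First, the supersolution actually used is
\[
v(x) = x_n + x_n^{\alpha}\bigl(|x'|^2 - t^2\bigr),\qquad \alpha = \tfrac{2+\gamma}{n},
\]
on $\Omega_t = \{|x'| < t,\ 0 < x_n < (t^2 - |x'|^2)^{1/(1-\alpha)}\}$; this is precisely your construction with $b=1$, so the $(1-r^2)^b$ bookkeeping is superfluous. Second, the $\e$-perturbation is avoided altogether: by taking $t$ small (say $t^2 < 2^{-n}m$, and also small enough that $\Omega_t$ sits inside the $\delta$-neighbourhood of $\Gamma$), the determinant computation gives the \emph{strict} bound $\det D^2 v \le 2^{n-2} t^2 x_n^{\gamma} \le m x_n^{\gamma}/4$, so at a putative negative interior minimum $\bar x$ of $v-u$ one obtains $D^2 v(\bar x) \ge D^2 u(\bar x) \ge 0$ and hence $m\bar x_n^{\gamma}/4 \ge \det D^2 v(\bar x) \ge \det D^2 u(\bar x) \ge m\bar x_n^{\gamma}$, an immediate contradiction. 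Note also that your comparison step has the roles of $u$ and $v$ swapped: one must argue at a negative interior minimum of $v-u$, where $D^2 v \ge D^2 u \ge 0$ allows the determinant comparison; at a minimum of $u-v$ you would only get $D^2 u \ge D^2 v$, and since $v$ may fail to be convex this does not by itself yield a determinant inequality.
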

\begin{proof} Let $\alpha = (2+\gamma)/n$. Then $\alpha\in (0, 1)$. 
Fix $z\in \Omega$ being close to the interior of $\Gamma$.

By translating and rotating coordinates, we can assume that for some $t=t(n, m, \gamma, \delta, \Omega,\Gamma)\in (0, \min\{2^{-n/2}m^{1/2}, \delta^{\frac{1-\alpha}{2}}\})$,
\[z=(0, z_n)\in \Omega_{t}:=\{(x', x_n): |x'|<t, 0<x_n< (t^2-|x'|^2)^{\frac{1}{1-\alpha}}\}\subset\Omega,\] \[ \{(x', 0):|x'|\leq t\}\subset\Gamma\subset\p\Omega,
\quad\text{and }\dist(x,\p\Omega) = x_n\quad\text{for } x\in\Omega_t.\]
Let 
\[v(x) = x_n + x_n^{\alpha} (|x'|^2 -t^2).\]
Then, $v\in C^{\infty}(\Omega_t)$, and $v=0$ on $\p\Omega_t$.  By a computation (see, for example, \cite[Lemma 2.2]{LDCDS}), we have
\begin{eqnarray}
\label{detvneg}
\det D^2 v &=& 2^{n-2} x_n^{n\alpha-2} \Big[\alpha(1-\alpha) t^2 -(\alpha^2 +\alpha)|x'|^2\Big]\\& \leq& 2^{n-2} t^2 x_n^{n\alpha-2} =2^{n-2} t^2 x_n^{\gamma}\leq  m x_n^{\gamma}/4.\nonumber\end{eqnarray}
By the choice of $t$, one has \[f(x) \geq m \dist^{\gamma}(x, \p\Omega)= m x_n^\gamma\quad \text{in }\Omega_t.\] It follows from $\det D^{2} u=f\geq m x_n^\gamma$ in $\Omega_t$  that
\begin{equation}
\label{detuvd}
\det D^2 v<\det D^2 u\quad\text{in }\Omega_t.\end{equation}
We show that \begin{equation}
\label{vumind}
v\geq u\quad\text{ in }\Omega_t.
\end{equation}
 Note that, $u=v$ on $\p\Omega_t\cap\Gamma$, and $u\leq 0=v$ on $\p\Omega_t\setminus\Gamma$, by the convexity of $u$. Suppose that $v<u$ somewhere in $\Omega_t$. Then $v-u$ attains its minimum value in $\overline{\Omega_t}$
at some point $\bar x\in \Omega_t$. At this point, we have $D^2 v(\bar x)\geq D^2 u(\bar x)$. Thus, by (\ref{detvneg})
\[m\bar x^{\gamma}_n/4\geq\det D^2 v(\bar x)\geq \det D^2 u(\bar x)\geq m\bar x^{\gamma}_n,\]
a contradiction to (\ref{detuvd}). Therefore, (\ref{vumind}) holds.
 Hence, for $z= (0, z_n)$, we have
\[|u(z)|\geq |v(z)| =|v(0, z_n|= z_n^{\alpha} (t^2 - z_n^{1-\alpha})\geq z_n^{\alpha} t^2/2 =\dist^{\alpha} (z,\p\Omega)t^2/2,  \]
if $z_n>0$ is sufficiently small.
\end{proof}
\begin{rem} When $|x'|\rightarrow t$, we have $[\alpha(1-\alpha) t^2 -(\alpha^2 +\alpha)|x'|^2\rightarrow -2\alpha^2 t^2<0$. Thus, from (\ref{detvneg}), we see that the function $v$ in the proof of Lemma \ref{supdist} is not convex.
\end{rem}
When $\det D^2 u\leq M \dist^{\gamma}(\cdot,\p\Omega)$, we have the following H\"older estimate whose proof is an easy adaptation of the above computation.
\begin{lem}
\label{subdist}
Let $\Omega$ be a bounded convex domain in $\R^n$ ($n\geq 3$).
Let $M>0$ and $0\leq \gamma<n-2$. Let $u\in C(\overline{\Omega})$ be an Aleksandrov solution to
  \begin{equation*}
   \det D^{2} u~\leq  M\dist^{\gamma}(\cdot, \p\Omega) \h~\text{in} ~\Omega, \quad
u =0\h~\text{on}~\p \Omega.
\end{equation*}
 Then for all $x\in\Omega$, we have 
 \[|u(x)| \leq C(n,M, \gamma,\Omega) \dist^{\frac{2+\gamma}{n}}(x, \p\Omega). \]
\end{lem}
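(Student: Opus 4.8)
The plan is to bound $|u|$ from above by an explicit convex \emph{subsolution} and then apply the comparison principle (Lemma~\ref{comp_prin}). The barrier is exactly the function appearing in the proof of Lemma~\ref{supdist}, namely $x_n^{\alpha}(|x'|^2-t^2)$ with $\alpha=\tfrac{2+\gamma}{n}$, but now used on all of $\Omega$ and with $t$ taken \emph{large} rather than small, which turns it into a genuine convex subsolution. \emph{Normalization.} Fix $x_0\in\Omega$, set $d=\dist(x_0,\p\Omega)$, and pick $y_0\in\p\Omega$ with $|x_0-y_0|=d$. After a rigid motion we may assume $y_0=0$, the supporting hyperplane of $\Omega$ at $y_0$ is $\{x_n=0\}$, $\Omega\subset\{x_n>0\}$, and $x_0=(0',d)$. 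Writing $R=\diam(\Omega)$, every $x=(x',x_n)\in\Omega$ satisfies $|x'|\le|x|=|x-y_0|\le R$; and since $\{x_n=0\}\subset\Omega^c$ we get $\dist(x,\p\Omega)=\dist(x,\Omega^c)\le\dist(x,\{x_n=0\})=x_n$, so the hypothesis yields $\det D^2 u\le M x_n^{\gamma}$ in $\Omega$.

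\emph{The barrier.} Put $\alpha=\tfrac{2+\gamma}{n}$; since $0\le\gamma<n-2$ we have $\alpha\in[\tfrac2n,1)$ and $n\alpha-2=\gamma$. For a parameter $t>0$ set $w(x)=x_n^{\alpha}(|x'|^2-t^2)$. Adding an affine term does not change the Hessian, so the computation leading to (\ref{detvneg}) gives
\[\det D^2 w(x)=2^{n-2}x_n^{\gamma}\big[\alpha(1-\alpha)t^2-(\alpha^2+\alpha)|x'|^2\big],\]
while a direct inspection of $D^2 w$ (its $(n-1)\times(n-1)$ block equals $2x_n^{\alpha}I$, which is positive definite, $w_{x_nx_n}>0$ because $\alpha<1$ and $|x'|<t$, and the Schur complement criterion applies) shows $D^2 w\ge0$ precisely where $(1-\alpha)t^2\ge(1+\alpha)|x'|^2$. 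Now choose $t=t(n,\gamma,M,R)$ so large that $2^{n-2}\alpha\big[(1-\alpha)t^2-(1+\alpha)R^2\big]\ge M$; this in particular forces $(1-\alpha)t^2>(1+\alpha)R^2$. Since $|x'|\le R$ on $\Omega$, it follows that $w$ is convex on $\Omega$ — hence, being continuous on $\ov\Omega$, convex on $\ov\Omega$ — and that $\det D^2 w\ge M x_n^{\gamma}\ge M\dist^{\gamma}(\cdot,\p\Omega)\ge\det D^2 u$ in $\Omega$ (in the sense of measures). Finally, on $\p\Omega$ we have $|x'|\le R<t$, so $w=x_n^{\alpha}(|x'|^2-t^2)\le0=u$.

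\emph{Conclusion.} Lemma~\ref{comp_prin}, applied to the convex functions $u,w\in C(\ov\Omega)$ with $u\ge w$ on $\p\Omega$ and $\det D^2 u\le\det D^2 w$ in $\Omega$, gives $u\ge w$ in $\Omega$. Evaluating at $x_0=(0',d)$ yields $|u(x_0)|=-u(x_0)\le-w(x_0)=t^2 d^{\alpha}=t^2\,\dist^{\frac{2+\gamma}{n}}(x_0,\p\Omega)$, which is the asserted estimate with $C(n,M,\gamma,\Omega)=t^2$. There is no serious obstacle; the only delicate point is in the barrier step, where $t$ must simultaneously be large enough for $w$ to be convex on all of $\Omega$ \emph{and} for its Monge–Amp\`ere measure to dominate $M x_n^{\gamma}$ there — and this is possible exactly because $\alpha<1$, i.e. because $\gamma<n-2$. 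The auxiliary inequality $\dist(x,\p\Omega)\le x_n$ coming from the supporting hyperplane at the nearest boundary point is what replaces the awkward weight $\dist^{\gamma}$ by the tractable $x_n^{\gamma}$ throughout.
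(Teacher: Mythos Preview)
Your proof is correct and follows essentially the same approach as the paper: after the same normalization, the paper uses the barrier $w(x)=x_n+x_n^{\alpha}(|x'|^2-K)$ with $K=\frac{2(\diam\Omega)^2+M+1}{\alpha(1-\alpha)}$, checks $\det D^2 w\ge (M+1)x_n^{\gamma}\ge\det D^2 u$ and $w\le 0$ on $\p\Omega$, and applies Lemma~\ref{comp_prin}. Dropping the affine term $x_n$ as you do is immaterial (it leaves the Hessian unchanged and makes the boundary inequality $w\le 0$ slightly more transparent), and your explicit Schur-complement verification of convexity is a welcome addition that the paper only asserts.
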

\begin{proof}
Let $\alpha = (2+\gamma)/n$. Then $\alpha\in (0, 1)$. Let $z=(z', z_n)$ be an arbitrary point in $\Omega$. By translation and rotation of coordinates, we can assume that:  $0\in \p\Omega$,  $\Omega\subset \R^n_+=\{x=(x', x_n)\in\R^n: x_n>0\}$, 
 the $x_n$-axis points inward $\Omega$, $z$ lies on the $x_n$-axis, and $z_n=\dist (z,\p\Omega)$. 
Consider
\[w(x) = x_n + x_n^{\alpha} (|x'|^2 -K),\]
where\[ K:=\frac{2(\diam(\Omega))^2 + M+1}{\alpha(1-\alpha)}.\]
Then $w$ is convex, 
$w\in C^{\infty}(\Omega)$, $w\leq 0$ on $\p\Omega$, and 
\begin{eqnarray*}\det D^2 w(x) &=& 2^{n-2} x_n^{n\alpha-2} \Big[\alpha(1-\alpha) K -(\alpha^2 +\alpha)|x'|^2\Big]\\& \geq& (M+1) x_n^{n\alpha-2} \geq (M+1) \dist^{\gamma}(x,\p\Omega)\geq \det D^2 u.\end{eqnarray*}
By the comparison principle in Lemma \ref{comp_prin}, we have $w\leq u$. Therefore, at $z$, we have
\[|u(z)| \leq |w(z)| = K z_n^{\alpha} - z_n \leq K \dist^{\frac{2+\gamma}{n}}(z, \p\Omega). \]
\end{proof}
\begin{rem} \leavevmode
\begin{enumerate}
\item[(i)] Lemma \ref{subdist} was also established in \cite[Theorem 1.1]{LL} with a different proof.
\item[(ii)] In view of Lemma \ref{supdist}, the H\"older exponent $(2+\gamma)/n$ in Lemma \ref{subdist} is optimal.
\end{enumerate}
\end{rem}
\begin{proof}[Proof of Theorem \ref{minusp}] If $q<0$, then as observed in the proof of Theorem 1.1 in \cite{LDCDS}, we have $u\in C^{\infty}(\Omega)$ while for $q\geq 0$, we also have $u\in C^{\infty}(\Omega)$ by \cite[Proposition 2.8]{LSNS}.

{\bf Case 1: $q=-p<0$}.  Then $u\in  C^{\infty}(\Omega)\cap C(\overline{\Omega})$ solves
  \begin{equation*}
   \det D^{2} u~= |u|^{-p} \h~\text{in} ~\Omega, \quad
u =0\h~\text{on}~\p \Omega.
\end{equation*}
We need to show that 
 for $x\in\Omega$ sufficiently close to $\Gamma$, we have 
 \begin{equation}
 \label{up_est}
 |u(x)|\geq c(n,p, \Omega,\Gamma)\dist^{\frac{2}{n+p}} (x,\p\Omega).\end{equation}

First, we note that, under the following rescaling of the domain and the solution $u$:
\[\tilde \Omega:=\lambda\Omega,\quad, \tilde u(y)= \lambda^{\frac{2n}{n+p}} u(\lambda^{-1} y) (y\in\tilde\Omega),\]
$\tilde u$ solves
\begin{equation*}
 \left\{
 \begin{alignedat}{2}
   \det D^{2} \tilde u~&= |u|^{-p} \h~&&\text{in} ~\tilde \Omega, \\\
\tilde u &=0\h~&&\text{on}~\p \tilde \Omega.
 \end{alignedat}
 \right.
\end{equation*}
By rescaling using a suitable $\lambda(n, p,\Omega,\Gamma)>0$, translating and rotating coordinates, we can assume that
 $$\Omega_1:=\{(x', x_n): |x'|<1, 0<x_n < (1- |x'|^2)^{\frac{n}{n+ p-2}}\}\subset\Omega,$$
and \[\{(x', 0): |x'|\leq 1\}\subset\Gamma\subset\p\Omega.\] 
Let 
$$w=\Big [x_n -x_n^{\frac{2}{n+ p}} (1-|x'|^2)^{\frac{n}{n+ p}}\Big]/2$$ 
 be as in Lemma \ref{suplem}. 
 
We show that $w\geq u$ in $\Omega_1$. Note that $w=0\geq u$ on $\p\Omega_1$. 
If $w-u$ attains its minimum value on $\overline{\Omega_1}$ at $x_0\in\Omega_1$ with $w(x_0)<u(x_0)<0$, then $D^2 w(x_0)\geq D^2 u(x_0)$. It follows that
$$|w(x_0)|^{-p}\geq \det D^2 w(x_0)\geq \det D^2 u(x_0) \geq |u(x_0)|^{-p}.$$
Therefore, $|w(x_0)|^{-p}\geq |u(x_0)|^{-p}$ which contradicts $|w(x_0)|>|u(x_0)|$ and $p>0$.

Now, from $w\geq u$ in $\Omega_1$, we find that for $x=(0, x_n)\in\Omega_1$,
\[|u(x)|\geq |w(x)| =(x_n^{\frac{2}{n+ p}}-x_n)/2\geq x_n^{\frac{2}{n+ p}}/2= \dist^{\frac{2}{n-q}} (x,\p\Omega)/2, \]
if $x_n>0$ is sufficiently small.
This proves (\ref{up_est}).
\vglue 0.2cm
{\bf Case 2:} $q=0$ and $n\geq 3$. Applying Lemma \ref{supdist} to $\gamma=0$ and $m=1$, we find that
 \[|u(x)| \geq c(n,\Omega,\Gamma) \dist^{\frac{2}{n}}(x, \p\Omega) \]
for $x\in\Omega$ sufficiently close to $\Gamma$.
\vglue 0.2cm 

{\bf Case 3:} $0<q<n-2$ and $n\geq 3$. Fix $\beta\in (\frac{2}{n-q}, 1)$. 
 We show that for $x\in\Omega$ sufficiently close to $\Gamma$, 
 \begin{equation}
 \label{uqbeta}
 |u(x)|\geq c(n,q, \beta, \Omega,\Gamma)\dist^{\beta} (x,\p\Omega).\end{equation}
 By the convexity of $u$, we have (see \cite[p. 1531]{LSNS})
\[|u(x)|\geq \frac{\dist(x,\p\Omega)}{\text{diam}(\Omega)}\|u\|_{L^{\infty}(\Omega)}\quad\text{for } x\in\Omega.\]
 On the other hand, by \cite[Lemma 3.1(iii)]{LSNS}, we have 
$\|u\|_{L^{\infty}(\Omega)} \geq c(n, q,\Omega)>0$.
Thus, there is a constant $c_0(n, q,\Omega)>0$ such that
\begin{equation}
\label{case0}
|u(x)|\geq c_0 \dist(x,\p\Omega) \quad\text{for } x\in\Omega.\end{equation}
Define the sequence $\{\alpha_k\}_{k=0}^{\infty}$ by
 \[\alpha_0=1, \quad\text{and }\alpha_{k+1}= \frac{2+q\alpha_k}{n}\quad\text{for } k\geq 0.\] 
 Note that
 \[\alpha_{k+1} - \frac{2}{n-q} = \frac{q}{n}(\alpha_{k} - \frac{2}{n-q}).\]
 Thus, the sequence $\{\alpha_k\}$ is strictly decreasing from $1$ to $\frac{2}{n-q}$. 
 
 To prove (\ref{uqbeta}), it suffices to prove by induction that there is a constant $c_k(n, q, k, \Omega,\Gamma)>0$ such that for $x\in\Omega$ sufficiently close to $\Gamma$, we have 
 \begin{equation}
 \label{casek}
 |u(x)|\geq c_k\dist^{\alpha_k} (x,\p\Omega).\end{equation}
 The base case $k=0$ follows from (\ref{case0}). Suppose we have proved (\ref{casek}) for $k$. We need to prove it for $(k+1)$. Indeed, for $x\in\Omega$ sufficiently close to $\Gamma$, we have from the induction hypothesis that
 \begin{equation}
 \label{uckq}
 \det D^2 u\geq c_k^q \dist^{q\alpha_k} (x,\p\Omega).
 \end{equation}
 Fix $z\in \Omega$ being close to $\Gamma$. 
Then, by Lemma \ref{supdist} applied to the case $\gamma= q\alpha_k$ and $m= c_k^q$, there is a constant $c_{k+1}= c_{k+1}(n, k, q,\Omega,\Gamma)>0$ such that
\[|u(z)|\geq c_{k+1}\dist^{\alpha_{k+1}} (z,\p\Omega).  \]
Therefore,  (\ref{casek}) is proved for $(k+1)$.
\end{proof}

\section{Global log-Lipschitz estimates and application}
\label{subsup}

In this section, we prove Proposition \ref{Alek2d} and Theorem \ref{wphi}.
\begin{proof} [Proof of Proposition \ref{Alek2d}]
The proof is is based on a construction of a log-Lipschitz convex subsolution. Denote a point $x\in \R^n$ by $x=(x', x_n)$. 
Let $z=(z', z_n)$ be an arbitrary point in $\Omega$. By translation and rotation of coordinates, we can assume that:  $0\in \p\Omega$,  $\Omega\subset \R^n_+=\{x=(x', x_n)\in\R^n: x_n>0\}$, 
 the $x_n$-axis points inward $\Omega$, $z$ lies on the $x_n$-axis, and $z_n=\dist (z,\p\Omega)$. Let $D=\diam (\Omega)$, and
 $$v(x)=(M+2D^2) x_n \log (x_n/D) + x_n (|x'|^2-D^2).$$
Then, $v\leq 0$ on $\p\Omega$ and
\begin{equation*}
 D^2 v =
  \begin{pmatrix}
  2x_n & 0 & \cdots & 0 & 2x_1 \\
  0 & 2x_n & \cdots & 0 & 2x_2 \\
  \vdots  & \vdots  & \ddots & \vdots & \vdots  \\
  0 & 0 & \cdots & 2x_n &  2x_{n-1}\\
  2x_1 & 2x_2 & \cdots & 2x_{n-1} & \frac{M+ 2D^2}{x_n} 
 \end{pmatrix}.
\end{equation*}
By induction, we find that 
\[\det D^2 v(x)= 2^{n-1} x_n^{n-2} (M+ 2D^2-2|x'|^2)\geq 2M x_n^{n-2}\geq 2M \dist^{n-2}(x,\p\Omega).\]
Moreover, $v$ is convex in $\Omega$. From the assumption on $u$, we have $u\geq v$ on $\p\Omega$, and
\[\det D^2 v\geq \det D^2 u\quad\text{in }\Omega.\]
By the comparison principle in Lemma \ref{comp_prin}, we have $u\geq v$ in $\Omega$. Thus
 \begin{eqnarray*}|u(z)|\leq |v(z)|&=& (M+ 2D^2) z_n\log (D/z_n) + D^2 z_n\\ &\leq& C(M, D) z_n (1+ |\log z_n|).
 \end{eqnarray*}
 Since $z_n=\dist (z,\p\Omega)$, the proposition is proved.
\end{proof}
\begin{rem}\leavevmode
\begin{enumerate}
\item[(i)] The global Log-Lipschitz estimate in Proposition \ref{Alek2d} is sharp in two dimensions. This can be seen from formula (\ref{sigL}) for 
the solution $\sigma_T$ to (\ref{STET}) in a right triangle in the plane. 
\item[(ii)]
The global  log-Lipschitz estimate in  Proposition \ref{Alek2d} strengthens an almost Lipschitz estimate in the main theorem of \cite{LL}. See Theorem 1.1 there for the case $a=\infty$, $\alpha=0$ and $\beta = 2n-1$. 
\end{enumerate}
\end{rem}

\begin{proof}[Proof of Theorem \ref{wphi}] We divide the proof into several steps. 
Note that for $\gamma>0$, $\tilde u = \gamma u$ satisfies
\begin{equation}
\label{u_res}
\tilde U^{ij} D_{ij} \tilde w = -\gamma^{-1} f,\end{equation}
where $\tilde U=(\tilde U^{ij})$ is the cofactor matrix of $D^2 \tilde u$, and $\tilde w = (\det D^2 \tilde u)^{-1}$. Thus, 
by considering $\|f^{+}\|_{L^{\infty}(\Omega)} u$ instead of $u$, it suffices to prove the theorem under the assumption that \[\|f^{+}\|_{L^{\infty}(\Omega)}=1.\]

{\bf Step 1.} We first establish the lower bound on the Hessian determinant $\det D^2 u$ 
\begin{equation}
\label{detc1}
\det D^2 u \geq c_1(n,  \diam(\Omega))
\end{equation}
via the following upper bound for $w$: 
\begin{equation}
\label{wbound}
 w\leq C_1(n,  \diam(\Omega)).
\end{equation}
Indeed, we will use the Aleksandrov-Bakelman-Pucci estimate (see \cite[Theorem 9.1]{GT}) for (\ref{AMCE}). By this estimate, and $\det (U^{ij})= (\det D^2u)^{n-1}= w^{-(n-1)}$, we have
\begin{eqnarray*}
\norm{w}_{L^{\infty}(\Omega)}& \leq&  C(n) \diam (\Omega)\norm{\frac{f^{+}}{(\det U)^{1/n}}}_{L^{n}(\Omega)}\\
&=&  C(n) \diam (\Omega)\norm{f^{+} w^{\frac{n-1}{n}}}_{L^{n}(\Omega)}\leq  C(n)\diam (\Omega)\norm{w}^{\frac{n-1}{n}}_{L^{\infty}(\Omega)}.
\end{eqnarray*}
Thus, we can easily obtain (\ref{wbound}).

{\bf Step 2.} We show that
\begin{equation}\det D^2 u(x)\geq 
\begin{cases}
c(n, \diam (\Omega))  \dist^{-\frac{2}{n}}(x,\p\Omega)   & \quad\text{if }n>2,\\
c( \diam (\Omega)) \dist^{-1} (x,\p\Omega)  (1+ |\log \dist(x,\p\Omega)|)^{-1} & \quad\text{if } n=2.
\end{cases}
\label{wstep2n}
\end{equation}
Indeed, let $z=(z', z_n)$ be an arbitrary point in $\Omega$. By translation and rotation of coordinates, we can assume that:  $0\in \p\Omega$,  $\Omega\subset \R^n_+=\{x=(x', x_n)\in\R^n: x_n>0\}$, 
 the $x_n$-axis points inward $\Omega$, $z$ lies on the $x_n$-axis, and $z_n=\dist (z,\p\Omega)$. We will compare $w$ with  the functions $v_{\alpha}$ defined as follows. 

Let $D:=\diam(\Omega)$.  Consider for $\alpha\in [\frac{2}{n}, 1]$
\begin{equation}v_{\alpha}(x)= 
\begin{cases}
x_n^{\alpha} (|x'|^2 -C_\alpha)\quad\text{where } C_{\alpha}= \frac{1+ 2D^2 }{\alpha(1-\alpha)} & \quad \text{if } \alpha \in [\frac{2}{n}, 1), \\
(1+2D^2) x_n \log (x_n/D) + x_n (|x'|^2-D^2)& \quad \text{if } \alpha =1.
\end{cases}
\label{alphaeq}
\end{equation}
Then, for $\alpha\in [\frac{2}{n}, 1)$, we find from \cite[Lemma 2.2]{LDCDS} that $v_{\alpha}$ is convex in $\Omega$, $v_{\alpha}\leq 0$ on $\p\Omega$, and   
\begin{equation*}\det D^2v_{\alpha}= 2^{n-1}x_n^{n\alpha-2} [\alpha(1-\alpha) C_\alpha- (\alpha^2+ \alpha) |x'|^2] \geq x_n^{n\alpha-2}\geq \dist^{n\alpha-2}(x,\p\Omega)~\text{in}~\Omega.
\end{equation*}
For $\alpha=1$,  as in the proof of Proposition \ref{Alek2d}, we find that $v_1$ is convex in $\Omega$,
and \begin{equation*}\det D^2v_1\geq x_n^{n-2}\geq \dist^{n-2}(x,\p\Omega)~\text{in}~\Omega,~ \text{and} ~v_1\leq 0~\text{on}~\p\Omega.
\end{equation*}
Thus, for all $\alpha\in [\frac{2}{n}, 1]$, we have
$v_\alpha$ is convex in $\Omega$,
and \begin{equation}
\label{vadet}
\det D^2v_\alpha\geq \dist^{n\alpha-2}(x,\p\Omega)~\text{in}~\Omega,~ \text{and} ~v_\alpha\leq 0~\text{on}~\p\Omega.
\end{equation}

To prove (\ref{wstep2n}), it suffices to show that 
 \begin{equation}
 \label{beta1n2}
w\leq -C(n, \diam (\Omega)) v_{2/n}\quad\text{in }\Omega.
 \end{equation}
 Indeed, suppose (\ref{beta1n2}) holds. If $n>2$, then from (\ref{beta1n2}) and (\ref{alphaeq}), we have at $z=(0, z_n)$
 \begin{eqnarray*}w(z) \leq -C(n,\diam (\Omega)) v_{ 2/n}(z)&=& C(n, \diam (\Omega)) C_{2/n} z_n^{2/n}\\&=& C(n,\diam (\Omega)) \dist^{2/n} (z,\p\Omega).\end{eqnarray*}
 Therefore, (\ref{wstep2b}) holds in this case because $\det D^2 u(z)=[w(z)]^{-1}$. The case $n=2$ is similar.
 
 To prove (\ref{beta1n2}), 
 we will use
the matrix inequality: for positive $n\times n$ matrices $A$ and $B$, \begin{equation}
\label{Mineq}
\text{trace}(AB)\geq n (\det A)^{1/n} (\det B)^{1/n}.\end{equation}
Then, using (\ref{detc1}) and (\ref{vadet}) together with $\det (U^{ij})= (\det D^2 u)^{n-1}$, we find that
\begin{eqnarray*}U^{ij} D_{ij}(-\tilde Cv_{2/n})& \leq& -\tilde Cn  (\det D^2 u)^{\frac{n-1}{n}} (\det D^2 v_{2/n})^{\frac{1}{n}}\\ &\leq& -\tilde C n c_1^{\frac{n-1}{n}} <-1\leq  -f = U^{ij} D_{ij} w,
\end{eqnarray*}
if $\tilde C=\tilde C (n,\diam (\Omega)) $ is large. 

Since \[-U^{ij} D_{ij}(w+ \tilde C v_{2/n}) <0\quad \text{in } \Omega,\quad \text{and } w+ \tilde C v_{2/n} \leq 0 \quad \text{on } \p\Omega,\]
the classical comparison principle implies
$w +\tilde C v_{2/n}\leq 0$ in $\Omega$.

By Step 2, the theorem is proved for $n=2$. Thus, in the rest of the proof, we only consider $n>2$.

{\bf Step 3.} The conclusion of the theorem follows from Step 2 and  the following improvement on the blow up rate of the Hessian determinant: If for some $\beta\in [\frac{1}{n}, \frac{n-1}{n}]$, we have
\begin{equation}\det D^2 u(x)\geq C(n,\beta, \diam (\Omega))  \dist^{-\beta}(x,\p\Omega)  \quad\text{for all } x\in\Omega,
\label{wstep2}
\end{equation}
then 
\begin{equation}\det D^2 u(x)\geq 
\begin{cases}
c(n,\beta, \diam (\Omega))  \dist^{-\beta-\frac{1}{n}}(x,\p\Omega)   & \quad\text{if }\frac{1}{n}\leq \beta<\frac{n-1}{n},\\
c(n, \diam (\Omega)) \dist^{-1} (x,\p\Omega)  (1+ |\log \dist(x,\p\Omega)|)^{-1} & \quad\text{if } \beta=\frac{n-1}{n}.
\end{cases}
\label{wstep2b}
\end{equation}
Assume now (\ref{wstep2}) holds. We prove (\ref{wstep2b}).
Indeed, let $z=(z', z_n)$ be an arbitrary point in $\Omega$. By translation and rotation of coordinates, we can assume that:  $0\in \p\Omega$,  $\Omega\subset \R^n_+=\{x=(x', x_n)\in\R^n: x_n>0\}$, 
 the $x_n$-axis points inward $\Omega$, $z$ lies on the $x_n$-axis, and $z_n=\dist (z,\p\Omega)$.  Let $v_\alpha$ be as in (\ref{alphaeq}). To prove (\ref{wstep2b}), as in Step 2, it suffices to show that 
 \begin{equation}
 \label{beta1n}
w\leq -C(n,\beta, \diam (\Omega)) v_{\beta + 1/n}\quad\text{in }\Omega.
 \end{equation}
 Let us prove this inequality. Note that $\beta+1/n\geq 2/n$. Using (\ref{Mineq}), (\ref{wstep2}) and (\ref{vadet}) together with $\det (U^{ij})= (\det D^2 u)^{n-1}$, we find that
\begin{eqnarray*}U^{ij} D_{ij}(-\tilde Cv_{\beta+1/n})& \leq& -\tilde Cn  (\det D^2 u)^{\frac{n-1}{n}} (\det D^2 v_{\beta+1/n})^{\frac{1}{n}}\\ &\leq& -\tilde C C(n,\beta, \diam (\Omega))  \dist^{-\beta\frac{n-1}{n}}(x,\p\Omega) \dist^{\frac{n\beta-1}{n}}(x,\p\Omega)\\
&=&- \tilde C C (n,\beta, \diam (\Omega))  \dist^{\frac{\beta-1}{n}}(x,\p\Omega) <-1\leq  -f = U^{ij} D_{ij} w,
\end{eqnarray*}
if $\tilde C=\tilde C (n,\beta, \diam (\Omega)) $ is large. 

Applying
the classical comparison principle as in Step 2, we obtain
$w +\tilde C v_{\beta+1/n}\leq 0$ in $\Omega$.
This gives (\ref{beta1n}) and the theorem is proved.
\end{proof}
\section{Further remarks on surface tensions}
\label{sfrem_sect}
In general, the surface tension is a convex function 
on a convex polygon in the plane; it solves a Monge-Amp\`ere equation with constant right-hand side, except possibly a finite number of singularities, and has 
piecewise affine boundary value. Understanding surface tensions and variational problems associated with them lead to deeper understanding of limit shapes of random surfaces; see, for example \cite{ADPZ, DS, KO2}.  
In this section, we show that the gradient of general surface tensions behaves similarly to that of the lozenge tiling model described in Section \ref{lozenge_sect}. Below, we describe below more precisely the context.
\vglue 0.2cm
Let $N\subset\R^2$ be a compact convex polygon with interior $N^o$. Let $L:N\rightarrow\R$ be a convex function which is piecewise affine on the boundary $\p N$. 
Let $\mathscr{P}$ be the set of corners of $N$, and let $\mathscr{L}\subset \p N\setminus \mathscr{P}$ be the finite set of ``quasi frozen" points where $L$ is not differentiable. Let $\mathscr{G}=\{q_1,\cdots, q_l\}\subset N^0$  be a finite set of points; when $l=0$, we make the convention that $\mathscr{G}=\emptyset$. The set $\mathscr{G}$ is called the set of ``gas" points. Let $c_1,\cdots, c_l$ be positive numbers.

We consider the Monge-Amp\`ere equation
\begin{equation}
 \label{STE}
 \left\{
 \begin{alignedat}{2}
   \det D^{2} \sigma~&= 1+\sum_{k=1}^l c_k \delta_{q_k} \h~&&\text{in} ~N^o, \\\
\sigma &=L\h~&&\text{on}~\p N.
 \end{alignedat}
 \right.
\end{equation}
The function $\sigma$, which solves (\ref{STE}) in the sense of Alkesandrov, is called the surface tension. Due to the appearance of the Dirac masses at $\{q_k\}_{k=1}^l$, (\ref{STE}) is in fact a singular Monge-Amp\`ere equation when $l\geq 1$.
\vglue 0.2cm
By \cite[Theorem 1.1]{H}, there is a unique convex solution $\sigma\in C(N)$ to (\ref{STE}).
It was proved in a recent paper of Astala, Duse, Prause, and Zhong \cite{ADPZ} that for any closed interval $J\subset \p N$ not containing any points of $\mathscr{P}\cup\mathscr{L}$, we have
\[\lim_{z\rightarrow J, z\in N^o} |D\sigma(z)|\rightarrow \infty.\]
Here, we give the precise blow up rate of $|D_\nu\sigma|$ near $J$ where $\nu$ is the unit vector perpendicular to $J$ and $D_\nu \sigma= D\sigma \cdot \nu$. Our result states as follows.
\begin{prop} 
\label{Dsiglog}
Let $\sigma\in C(N)$ be the convex solution to (\ref{STE}).  Consider a closed interval $J\subset \p N$ not containing any points of $\mathscr{P}\cup\mathscr{L}$. Let $\nu$ be the unit vector perpendicular to $J$. Then, 
there exists positive constants $c$ and $C$, depending only on $N, J, L, \{c_{k}\}_{k=1}^l$, and $\mathscr{G}$ such that
\[c |\log \dist(z, \p N|\leq |D_\nu\sigma (z)| \leq C |\log \dist(z, \p N|\]
when $z\in N^o$ with $\dist (z, J)$ being sufficiently small.
\end{prop}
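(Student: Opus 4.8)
The plan is to localize the estimate near a relative‑interior point of $J$ and there squeeze $\sigma$ between explicit logarithmic barriers. Fix $p$ in the relative interior of $J$. Since $J$ avoids $\mathscr{P}\cup\mathscr{L}$, the datum $L$ is affine on the edge‑segment of $\p N$ through $p$, and since $\mathscr{G}\subset N^o$ all the gas points sit at a fixed positive distance from $\p N$; hence in a fixed one‑sided neighborhood of $p$ the equation (\ref{STE}) reduces to $\det D^{2}\sigma=1$. After a rigid motion (which preserves the Euclidean metric and the Monge--Amp\`ere operator) we may assume $p=0$, that $N$ near $p$ coincides with $\{x_2>0\}$, that $J_0:=\{(x_1,0):|x_1|\le a\}\subset J$ for some $a>0$, and that $\dist(x,\p N)=x_2$ near $p$. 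Writing $L(x_1,0)=\alpha x_1+\beta$ near $p$ and setting $\ell(x_1,x_2):=\alpha x_1+\beta$, we pass to $\tilde\sigma:=\sigma-\ell$, which in a fixed neighborhood $\Omega'\subset N$ of $p$ is convex, lies in $C^{\infty}(\Omega')$ (interior regularity, as $\det D^{2}\tilde\sigma=1$ there), satisfies $\det D^{2}\tilde\sigma=1$ and $\tilde\sigma=0$ on $\Omega'\cap\{x_2=0\}$, and is bounded on $\overline{\Omega'}$. Since $D_\nu\sigma=\sigma_{x_2}=\tilde\sigma_{x_2}$ and $|\log\dist|\to\infty$ as $z\to J$, it suffices to show that $|\tilde\sigma_{x_2}(x_1,x_2)|$ is bounded above and below by positive constant multiples of $|\log x_2|$ when $x_2\to0^{+}$ and $|x_1|$ is small.

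The core of the argument is the sharp two‑sided estimate
\[c_1\, x_2\,|\log x_2|\ \le\ |\tilde\sigma(x_1,x_2)|\ \le\ c_2\, x_2\,|\log x_2|\qquad\text{for }x_2>0\text{ small and }|x_1|\text{ small.}\]
The upper inequality is a localized, two‑dimensional version of Proposition \ref{Alek2d} (there $\dist^{n-2}\equiv1$): one compares $\tilde\sigma$ from below with the explicit convex logarithmic subsolution $v(x)=(1+2D^{2})\,x_2\log(x_2/D)+x_2(|x_1|^{2}-D^{2})$ with $D$ large — which has $\det D^{2}v\ge2\ge1$, is nonpositive, and behaves like $-x_2|\log x_2|$ as $x_2\to0^{+}$ — and combines $\tilde\sigma\ge v$ with the trivial bound $\tilde\sigma(x)\le C_0 x_2$ coming from the affine minorant of $\tilde\sigma$ along $\Omega'\cap\{x_2=0\}$. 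The lower inequality is obtained by comparing $\tilde\sigma$ from above with a convex logarithmic \emph{supersolution} (a function of the form $c_1 x_2\log(x_2/D)$ plus a lower‑order convex correction, so that $\det D^{2}\le1$), which forces $\tilde\sigma\le -c_1 x_2|\log x_2|$, whence $|\tilde\sigma|\ge c_1 x_2|\log x_2|$ and, incidentally, $\tilde\sigma<0$ just inside $J_0$; alternatively one may compare with a suitably rescaled and placed copy of the triangle surface tension $\sigma_T$ of (\ref{sigL}). In both comparisons the barrier inequalities must be checked on all of $\p\Omega'$, and I expect the portion of $\p\Omega'$ lying in $N^{o}$ to be the main technical obstacle: there one a priori controls $\tilde\sigma$ only by its $L^{\infty}$ bound, which is beaten by the barriers once $D$ is large away from $\{x_2=0\}$, but near where $\p\Omega'\setminus\p N$ approaches $\{x_2=0\}$ one needs a preliminary estimate — provided by a localized form of Caffarelli's bound (\ref{uqzero}) and/or by a bootstrap on a slightly larger comparison domain — together with the fact that $\tilde\sigma=L-\ell$ is piecewise affine, hence Lipschitz, on the part of $\p\Omega'$ lying on $\p N$.

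Finally I pass from the two‑sided bound on $\tilde\sigma$ to the gradient bound by a one‑dimensional convexity argument. Fix $z=(z_1,z_2)\in\Omega'$ close to $p$, so $d:=\dist(z,\p N)=z_2$ is small, and set $g(t):=\tilde\sigma(z_1,t)$ for $t\in[0,\delta)$. Then $g$ is convex, $g(0)=0$, $g<0$ on $(0,\delta)$, and $c_1 t|\log t|\le |g(t)|\le c_2 t|\log t|$ there, so $\varphi:=-g'\ge0$ is nonincreasing with $\int_0^{t}\varphi=|g(t)|$. Hence $\varphi(t)\le t^{-1}\int_0^{t}\varphi\le c_2|\log t|$, which gives $|D_\nu\sigma(z)|=|g'(d)|\le C|\log d|$. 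For the lower bound, suppose $\varphi(t_0)<\tfrac{c_1}{8}|\log t_0|$ for some small $t_0$; since $\varphi$ is nonincreasing, $\int_{t_0}^{\sqrt{t_0}}\varphi\le\varphi(t_0)\sqrt{t_0}<\tfrac{c_1}{8}|\log t_0|\sqrt{t_0}$, and combining with $\int_0^{\sqrt{t_0}}\varphi=|g(\sqrt{t_0})|\ge c_1\sqrt{t_0}\,|\log\sqrt{t_0}|=\tfrac{c_1}{2}\sqrt{t_0}\,|\log t_0|$ and $\int_0^{t_0}\varphi=|g(t_0)|\le c_2 t_0|\log t_0|$ gives $\tfrac{c_1}{2}\le c_2\sqrt{t_0}+\tfrac{c_1}{8}$, i.e.\ $t_0\ge(3c_1/8c_2)^{2}$. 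Thus $\varphi(t)\ge\tfrac{c_1}{8}|\log t|$ for all small $t$, so $g'(d)\le-\tfrac{c_1}{8}|\log d|$ and $|D_\nu\sigma(z)|\ge c|\log d|$; undoing the rigid motion and recalling $\dist(z,\p N)=z_2$ finishes the proof. Note that integrating $\varphi$ over the logarithmically thick interval $[t,\sqrt{t}]$ rather than over $[t,2t]$ is exactly what makes the possible mismatch between $c_1$ and $c_2$ harmless; the genuine difficulty of the argument is concentrated in the barrier/localization step of the previous paragraph.
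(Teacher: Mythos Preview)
Your final calculus argument converting the two-sided estimate $c_1 x_2|\log x_2|\le|\tilde\sigma|\le c_2 x_2|\log x_2|$ into the gradient bound is correct and elegant; integrating $\varphi$ over $[t,\sqrt t]$ is exactly the right trick, and this step is a genuinely different (and arguably cleaner) route than the paper's, which instead uses the difference quotient $-\sigma_{x_2}(r\gamma,z_2)\ge (mz_2)^{-1}\bigl(\sigma(r\gamma,z_2)-\sigma(r\gamma,(m+1)z_2)\bigr)$ with a large auxiliary parameter $m$, feeding in the $\sigma_T$ comparison only at the outer point.

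The gap is in your barrier step, precisely where you flag it. For the upper bound $|\tilde\sigma|\le c_2 x_2|\log x_2|$ you need $\tilde\sigma\ge v$ on $\partial\Omega'$; on the lateral portion of $\partial\Omega'$ lying in $N^o$ and approaching $\{x_2=0\}$, the barrier satisfies $v\sim -(1+2D^2)x_2|\log x_2|$, so what you must verify there is exactly $|\tilde\sigma|\lesssim x_2|\log x_2|$ --- the estimate you are trying to prove. Caffarelli's bound (\ref{uqzero}) only gives $|\tilde\sigma|\le C x_2^\alpha$ with $\alpha<1$, and since $x_2^\alpha\gg x_2|\log x_2|$ as $x_2\to 0$ this is too weak; a bootstrap on a larger domain faces the same circularity. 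Your first suggestion for the lower bound on $|\tilde\sigma|$ (a ``logarithmic supersolution'' of the form $c_1 x_2\log(x_2/D)+\ldots$) has the identical lateral-boundary problem.

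The paper resolves this by abandoning localization for the upper bound and comparing on all of $N$. To absorb the Dirac masses it adds Lipschitz conical functions $a_k\hat C_{q_k,N}$ (cones with vertex value $-1$ at $q_k$, zero on $\partial N$), so that $\det D^2\bigl(v+\sum_k a_k\hat C_{q_k,N}\bigr)\ge 2+\sum_k c_k\delta_{q_k}\ge\det D^2\sigma$ by Lemma~\ref{detuv}. The boundary comparison on $\partial N$ then reduces to the one-sided Lipschitz inequality $L-\ell\ge -C_0\,\dist(\cdot,l_j)$ of Lemma~\ref{Llog}, which is the Lipschitz control you allude to --- but it must be invoked on \emph{all} of $\partial N$, not merely on $\partial\Omega'\cap\partial N$. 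Your alternative for the lower bound on $|\tilde\sigma|$ (comparison with a rescaled $\sigma_T$ in a small triangle $rT\subset N$ with base on $J$) is exactly what the paper does and works cleanly: since $\sigma_T=0$ on all of $\partial T$, the boundary comparison on $\partial(rT)$ reduces to $\sigma\le\tilde L$ with $\tilde L$ the affine interpolant of $\sigma$ at the three vertices, which follows from Jensen's inequality.
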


The proof of Proposition  \ref{Dsiglog} is based on an explicit formula (\ref{sigL}) for the surface tension $\sigma_T$ in the lozenge tiling model where $N$ is a triangle $T$ and there are no gas points, and Proposition \ref{Alek2d}. To handle the gas points $\{q_k\}$, we compare $\sigma$ with linear combinations of $\sigma_T$ and  conical convex functions $\hat C_{q_k, N}$ taking value $0$ on $\p N$ and $-1$ at $q_k$. 
 These conical convex functions are globally Lipschitz
 so the boundary gradient of $\sigma$ has the magnitude comparable to that of $\sigma_T$. For this, we use the following property of Monge-Amp\`ere measures; see \cite[Lemma 2.9]{F}.
\begin{lem}
\label{detuv}
 Let $u$ and $v$ be convex functions on a convex domain $\Omega\subset\R^n$. Then, in the sense of Aleksandrov, we have
 $$\det D^2(u+ v)\geq \det D^2 u + \det D^2 v.$$
 \end{lem}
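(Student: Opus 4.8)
The statement to prove is the superadditivity of Monge--Amp\`ere measures: for convex $u,v$ on a convex $\Omega\subset\R^n$, $\det D^2(u+v)\ge \det D^2 u+\det D^2 v$ in the sense of Aleksandrov. The plan is to reduce everything to the smooth case, where the inequality becomes a pointwise matrix inequality, and then pass to general convex functions by approximation, exploiting the weak-$*$ continuity of the Monge--Amp\`ere operator under local uniform convergence. Since the paper cites \cite[Lemma 2.9]{F} for this fact, I will present the standard argument along those lines.

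\textbf{Step 1: The smooth, strictly convex case.} Suppose first $u,v\in C^2(\Omega)$. Then $D^2(u+v)=D^2u+D^2v$ pointwise, and both $D^2u$ and $D^2v$ are symmetric positive semidefinite. The required inequality is then the Minkowski-type determinant inequality for positive semidefinite matrices: if $A,B\ge 0$ are $n\times n$, then $\det(A+B)\ge \det A+\det B$. I would record a quick proof of this: if $A$ is positive definite, write $\det(A+B)=\det A\,\det(I+A^{-1/2}BA^{-1/2})$; the eigenvalues $\mu_i\ge 0$ of $A^{-1/2}BA^{-1/2}$ satisfy $\prod_i(1+\mu_i)\ge 1+\prod_i\mu_i$ (expand the product; all cross terms are nonnegative), so $\det(A+B)\ge \det A\,(1+\det(A^{-1}B))=\det A+\det B$. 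If $A$ is only positive semidefinite, apply this to $A+\e I$ and let $\e\to 0^+$. Since for $C^2$ convex functions the Monge--Amp\`ere measure has density $\det D^2$, this proves the claim for $u,v\in C^2(\Omega)$.

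\textbf{Step 2: Mollification and passage to the limit.} For general convex $u,v$ on $\Omega$, fix any convex open $\Omega'\Subset\Omega$. Let $u_\e=u*\rho_\e$, $v_\e=v*\rho_\e$ be standard mollifications, defined and smooth on $\Omega'$ for $\e$ small, and convex there (convexity is preserved under convolution). By Step 1, $\det D^2(u_\e+v_\e)\,dx\ge \det D^2 u_\e\,dx+\det D^2 v_\e\,dx$ as measures on $\Omega'$. Now $u_\e\to u$, $v_\e\to v$, and $u_\e+v_\e\to u+v$ locally uniformly on $\Omega'$ as $\e\to 0$; by the weak-$*$ convergence of Monge--Amp\`ere measures under local uniform convergence of convex functions (\cite[Theorem 1.1.13]{G} / \cite[Theorem 2.3]{F}, already invoked in this note), we have $Mu_\e\rightharpoonup Mu$, $Mv_\e\rightharpoonup Mv$, and $M(u_\e+v_\e)\rightharpoonup M(u+v)$ in the weak-$*$ topology on $\Omega'$. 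Testing the inequality $M(u_\e+v_\e)\ge Mu_\e+Mv_\e$ against an arbitrary nonnegative $\varphi\in C_c(\Omega')$ and letting $\e\to 0$ gives $\int\varphi\,dM(u+v)\ge \int\varphi\,dMu+\int\varphi\,dMv$, i.e.\ $M(u+v)\ge Mu+Mv$ on $\Omega'$. Since $\Omega'\Subset\Omega$ was arbitrary, the inequality holds on all of $\Omega$.

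\textbf{Main obstacle.} The only delicate point is the continuity of $M$ under the weak-$*$ limit: one must ensure there is no loss of mass, which is exactly the content of the cited weak convergence theorem for Monge--Amp\`ere measures (valid precisely because the limits are taken along convex functions converging locally uniformly). Given that this theorem is available from the references already used in the excerpt, the rest is the elementary matrix inequality of Step 1 together with the standard fact that mollification preserves convexity. No issue arises from the possible singular parts of $Mu$ or $Mv$, since the inequality is an inequality between the total measures $M(u+v)$, $Mu$, $Mv$ and is preserved under weak-$*$ limits of nonnegative measures.
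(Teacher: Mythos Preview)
Your argument is correct. Note, however, that the paper does not actually prove this lemma: it simply cites \cite[Lemma 2.9]{F} and moves on. Your mollification-plus-weak-$*$-stability proof is precisely the standard argument one finds in such references, so there is nothing to compare---you have supplied the proof the paper outsourced.
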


Now, we proceed to prove Proposition \ref{Dsiglog}. 

We denote the points in $\mathscr{P}\cup \mathscr{L}$ as $\{p_1, \cdots, p_k\}$ in the counterclockwise direction. For convenience, denote $p_{k+1}= p_1$. Let $l_j$ denote the line going through $p_j$ and $p_{j+1}$. 
Let $L|_{[p_j, p_{j+1}]}$ be the restriction of $L$ to the line segment \[[p_j, p_{j+1}]:=\{tp_j + (1-t) p_{j+1}: 0\leq t\leq 1\}.\] 
Due to the convexity of the boundary data $L$ and its being affine on each segment $[p_j, p_{j+1}]$, we have the following lemma which says that $L$ is one-sided Lipschitz on the boundary.
\begin{lem}[One-sided Lipschitz property of boundary data]
\label{Llog}
Let $L_j:=L- L|_{[p_j, p_{j+1}]}$. Then, for all $x\in \p N$, we have
\begin{equation}\label{Lldist} L_j(x)\geq -C_0(L, N) \dist (x, l_j).
\end{equation}
\end{lem}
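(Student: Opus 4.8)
The plan is to prove the bound edge by edge, using only one‑dimensional convexity of $L$ along the edges of $N$ together with the piecewise affine structure of $L$ on $\p N$; in particular no regularity of $L$ in the interior of $N$ will be needed. First note that, since the $p_i$'s include every corner of $N$, consecutive points $p_j,p_{j+1}$ bound a straight segment $[p_j,p_{j+1}]$ lying on a single edge $e$ of $N$, and $l_j$ is precisely the line containing $e$; because $l_j$ supports the convex set $N$, in fact $l_j\cap N=e$, hence $l_j\cap\p N=e$. We extend $L|_{[p_j,p_{j+1}]}$ to an affine function on $\R^2$ (any extension works, since changing it only alters $L_j$ by a multiple of the signed distance to $l_j$); then $L_j=L-L|_{[p_j,p_{j+1}]}$ is convex on $N$, continuous and piecewise affine on $\p N$, vanishes on $[p_j,p_{j+1}]$, and satisfies $\sup_{\p N}|L_j|=:M<\infty$ with $M$ and the edgewise Lipschitz constants of $L_j$ depending only on $L$ and $N$.

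The key preliminary step is that $L_j\ge 0$ on the edge $e$. Indeed $L|_e$ is convex, being the restriction of the convex function $L$ to the segment $e\subset N$, so $L_j|_e=L|_e-(\text{affine})$ is convex on the segment $e$; a convex function on a segment that vanishes on a subsegment of positive length is nonnegative on the whole segment, hence $L_j\ge 0$ on $e$ and, in particular, $L_j\ge 0$ at both endpoints of $e$. Since $\dist(\cdot,l_j)=0$ on $e$, the desired inequality holds trivially there.

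Next I would fix any other edge $e''$ of $N$. On $e''$ the function $\dist(\cdot,l_j)$ is the restriction of an affine function that is $\ge 0$ on $N$, and it vanishes at most at one endpoint of $e''$: namely, if $e''$ meets $l_j$ at all it can only do so at a corner $q$ of $N$ shared with $e$ (because $l_j\cap\p N=e$). If $\dist(\cdot,l_j)$ has a positive lower bound $\delta_{e''}$ on $e''$, then $L_j\ge -M\ge -(M/\delta_{e''})\,\dist(\cdot,l_j)$ there. Otherwise $\dist(x,l_j)=(\sin\theta_q)\,|x-q|$ for $x$ on the line through $e''$, where $\theta_q\in(0,\pi)$ is the interior angle of the polygon $N$ at $q$, so $\sin\theta_q>0$; combining $L_j(q)\ge 0$ (from the previous step, as $q\in e$) with $L_j(x)\ge L_j(q)-\mathrm{Lip}(L_j|_{e''})\,|x-q|$ gives $L_j(x)\ge -\dfrac{\mathrm{Lip}(L_j|_{e''})}{\sin\theta_q}\,\dist(x,l_j)$ on $e''$. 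Taking $C_0$ to be the maximum of the finitely many constants produced above completes the proof.

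The step I expect to require the most care is exactly this last one near the shared corner $q$: it is what forces one to record $L_j(q)\ge 0$ (the convexity statement applied at an endpoint of $[p_j,p_{j+1}]$) and to use that $N$ is an honest polygon, so every interior angle is strictly between $0$ and $\pi$ and $\dist(\cdot,l_j)$ decays at worst linearly along $e''$. I would deliberately avoid the tempting shortcut of reading off a supporting affine function of $L_j$ at a relative‑interior point of $[p_j,p_{j+1}]$: it would force the supporting slope to be perpendicular to $l_j$ and yield the estimate in one line, but a finite‑valued convex function on $N$ can have infinite one‑sided derivative at a boundary point (a $t\log t$‑type profile), so such a supporting function need not exist without extra hypotheses on $L$; the edge‑by‑edge argument sidesteps this entirely.
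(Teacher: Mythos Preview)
Your proof is correct and follows essentially the same approach as the paper: both use one-dimensional convexity of $L$ along the edge $e\supset[p_j,p_{j+1}]$ to obtain $L_j\ge 0$ on $e$, treat edges disjoint from $e$ via the positive lower bound on $\dist(\cdot,l_j)$, and handle edges adjacent to $e$ by a Lipschitz/affine estimate near the shared corner. Your organization is slightly cleaner, replacing the paper's three-case split (according to whether $p_j,p_{j+1}$ coincide with vertices of $N$) by the single observation $L_j\ge 0$ on all of $e$ together with a uniform edgewise Lipschitz bound, so that only $L_j(q)\ge 0$ is needed at the shared corner rather than the strict inequality the paper exploits in its Case~1.
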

\begin{proof}
The line segment $[p_j, p_{j+1}]$ lies on some segment $[X, Y]\subset\p N$ where $X$ and $Y$ are vertices of $N$. By a coordinate change, we can assume that $N$ lies in the upper-half plane $\{x=(x_1, x_2)\in \R^2: x_2\geq 0\}$, and that $[X, Y]$ lies on the $x_1$-axis so $l_j$ is the $x_1$-axis. Note that $L_j=0$ on $[p_j, p_{j+1}]$. There are three cases.

{\bf Case 1:} $[p_j, p_{j+1}]$ lies in the interior of $[X, Y]$. By the convexity of $L_j$,  and $L_j=0$ on $[p_j, p_{j+1}]$, we have \[L_j>0 \quad\text{in }[X, Y]\setminus [p_j, p_{j+1}].\]
If $x\in \p N$ lies on a side $S$ with no common point with $XY$ then $x_2\geq c(N)>0$ so (\ref{Lldist}) is obvious. It remains to consider the case $S$ has a common point with $XY$, say $Y$. Since $L_j(Y)>0$, and $L_j$ is continuous on $S$, there is $c_j>0$ such that if $x\in S$ with $x_2\leq c_j$, then $L_j(x)\geq 0$. Then (\ref{Lldist}) also holds for $x\in \p N$ with $x_2>c_j$.

{\bf Case 2:} $[p_{j}, p_{j+1}]= [X, Y]$.   As in {\it Case 1}, it suffices to consider the case $x\in\p N$ where $S$ is a side of $N$ having a common point with $[X, Y]$. Without loss of generality, we can assume that common point is $Y$ and, by a translation of coordinates, $Y=0$. Since $L_j(0)=0$, we have $L_j(x) = a_1x_1 + a_2x_2$ on $S$. Since $S$ is not horizontal, it is a graph of the $x_2$ variable so there is $m\in\R$ such that $x_1=m x_2$ on $S$. It follows that
\[L_j(x)= (a_1 m + a_2) x_2 \geq -C x_2\]
which proves (\ref{Lldist}). 

{\bf Case 3:} One of $p_i, p_{j+1}$ is an endpoint of $[X, Y]$, say $p_{j+1}= Y$.  As in {\it Case 1}, it suffices to consider the case $x\in\p N$ where $S$ is a side of $N$ having a common point with $[X, Y]$. If $X\in S$, then we argue as in {\it Case 1} to obtain (\ref{Lldist}). If $Y\in S$, then we argue as in  {\it Case 2} to obtain (\ref{Lldist}).
\end{proof}

\begin{proof}[Proof of Proposition \ref{Dsiglog}] Assume that $J\subset (p_{j}, p_{j+1})$. 
By subtracting the affine function $L|_{[p_j, p_{j+1}]}$ from $\sigma$, we can assume that $\sigma=0$ on $J$.  This will not change our estimates for $|D_\nu \sigma|$ since this affine function has bounded slope which will be absorbed in the $\log$ terms. Let $T\subset\R^2$ be the triangle with vertices at $(0,0),  (1,0)$ and $(0, 1)$.

From the hypothesis on $J$, we know that $\det D^2\sigma =1$ and $\sigma$ is smooth in a neighborhood of $J$ in $N^o$. Recall that the Monge-Amp\`ere equation is invariant under the affine transformations of coordinates where the determinant of the transformations is $1$.

Thus, after a rotation and an affine transformation of coordinates, we can assume that $N$ is in the upper half space $\{x=(x_1, x_2)\in\R^2: x_2\geq 0\}$, and $J\subset\p N$ lies on the $x_1$ axis. From the assumption on $J$, we can find small constants $0<r<1$, $0<\delta<1/2$, and a positive integer $s$, all depending on $J, N, \mathscr{P}, \mathscr{L}, \mathscr{G}$ such that
\[J\subset \bigcup_{k=1}^s r\big([\delta, 1-\delta] \times \{0\}+ w^k\big),\quad \bigcup_{k=1}^s r\big([0, 1]  \times \{0\} + w^k\big) \subset [p_j, p_{j+1}],\]
for suitable $w^k=(w^k_1,0)\in\R^2$, where
\[ \bigcup_{k=1}^s r[T + w^k]\subset N, \quad \text{and}\quad \bigcup_{k=1}^s r[T + w^k]\quad\text{does not intersect }\mathscr{P}\cup \mathscr{L}\cup\mathscr{G}.\]
To prove the Proposition, it suffices to consider the case $s=1$, and $w^k=0$.
More precisely, we only need to consider
 the following scenario: 
\begin{center}
\it
There are $0<r<1$ and $0<\delta<1/2$ such that 
 $r T\subset N$ and $r T$ does not intersect $\mathscr{P}\cup \mathscr{L}\cup\mathscr{G}$, 
$J$ is contained in the interval $r[\delta, 1-\delta]\times \{0\}$ of the $x_1$-axis, and $r[0, 1]\times \{0\}\subset [p_j, p_{j+1}]$.
\end{center} 
We will establish two separate upper and lower bounds on $|D_\nu\sigma(r\gamma, z_2)|= |\sigma_{x_2}(r\gamma, z_2)|$
 for the points $(r\gamma, z_2)$ being close to $J$ where $\delta\leq \gamma\leq 1-\delta$ and $z_2>0$ is small. Note that, in this setting $ |\sigma_{x_2}(r\gamma, z_2)|=-\sigma_{x_2}(r\gamma, z_2)$.

{\bf Step 1: Upper bound on $ |\sigma_{x_2}(r\gamma, z)|$.} For each $k\in\{1, \cdots, l\}$, consider the conical convex function $\hat C_{q_k, N}$ which takes value $0$ on $\p N$ and $-1$ at $q_k$. Then $\hat C_{q_k, N}$ is globally Lispchitz with 
\begin{equation}
\label{CqkN}
|D\hat C_{q_k, N}|\leq C(q_k, N).\end{equation}
Then, there is a positive constant $a_k$ depending on $N, q_k$ and $c_k$ such that
\begin{equation}
\label{coniC}
\det (D^2 (a_k \hat C_{q_k, N})) = c_k \delta_{q_k}.
\end{equation}

 Let $C_0$ be as in Lemma \ref{Llog} and let $d=\diam (N)$. As in the proof of Proposition \ref{Alek2d}, we find that the function
\begin{equation}
\label{vNeq}
v(x)=(1+2d^2) x_2 \log (x_2/d) + x_2 ((x_1-r\gamma)^2-d^2-C_0)
\end{equation}
 is convex in $N$ with 
 \begin{equation}
 \label{vbdr1}
 v\leq -C_0 x_2 \quad\text{on }\p N, \quad\text{and } 
\det D^2 v\geq 2\quad\text{ in }N.
\end{equation}
Observe that we can enclose $N$ in a triangle of the form $sT + (a,0)$ where $s$ is large, and then use the formula (\ref{sigL}) for $\sigma_T$ to construct a convex function $v$ satisfying (\ref{vbdr1}). Our construction of $v$ in (\ref{vNeq}) is more direct. 

From $L|_{[p_j, p_{j+1}]}=0$, using Lemma \ref{Llog}  and (\ref{vbdr1}), we obtain on $\p N$ the estimate
\[\sigma = L= L_j \geq v  +  \sum_{k=1}^l  a_k \hat C_{q_k, N}. \]
Moreover, in view of Lemma \ref{detuv}, the second inequality in (\ref{vbdr1}) together with (\ref{coniC}) gives 
\[\det\Big[D^2 ( v +  \sum_{k=1}^l  a_k \hat C_{q_k, N})\Big] \geq 2  + \sum_{k=1}^l c_k \delta_{q_k}\geq \det D^2 \sigma. \]
It follows from the comparison principle in Lemma \ref{comp_prin} that 
\begin{equation}
\label{siglow1}
\sigma\geq v  + \sum_{k=1}^l  a_k \hat C_{q_k, N}.
\end{equation}
In particular, we have
\begin{equation}
\label{siglow2}
\sigma\geq -C(\mathscr{G}, L, N, \{c_k\}_{k=1}^l).
\end{equation}
By convexity, we have \[0=\sigma(r\gamma,0)\geq \sigma (r\gamma, z_2) + \sigma_{x_2} (r\gamma, z_2) (0- z_2).\] Hence, recalling (\ref{CqkN}) and (\ref{vNeq}), we have
\begin{eqnarray*}
-\sigma_{x_2} (r\gamma, z_2) &\leq& \frac{-\sigma (r\gamma, z_2)}{z_2}\\
&\leq& \frac{ - v(r\gamma, z_2)}{z_2} - \sum_{k=1}^l  a_k  \frac{ \hat C_{q_k, N}(r\gamma, z_2)}{z_2} 
\\&\leq& -(1+ 2d^2)\log (z_2/d) + d^2+ C_0+  \sum_{k=1}^l  a_k C(q_k, N)\\&\leq& C(N, J, L, \{c_{k}\}_{k=1}^l, \mathscr{G})  |\log z_2|
\end{eqnarray*}
 when $z_2>0$ is small. This gives the desired upper bound for $|\sigma_{x_2}(r\gamma, z_2)|$.

{\bf Step 2: Lower bound on $|\sigma_{x_2}(r\gamma, z_2)|$.}
This is an elaboration of the proof of Lemma 2.1 in \cite{ADPZ} in combination with the  global lower bound for $\sigma$ in (\ref{siglow1}). Let $h\in C(N)$ be the harmonic function in $N^0$ with boundary value $L$ on $\p N$. Then, since $\sigma$ is subharmonic, we have 
\begin{equation}
\label{sigh}
\sigma\leq h\quad\text{in } N.
\end{equation}
Combining (\ref{siglow2}) with (\ref{sigh}), we obtain
\begin{equation}
\label{sigbound}
|\sigma| \leq C(\mathscr{G}, L, N,  \{c_k\}_{k=1}^l).
\end{equation}

Consider the linear map \[\tilde L(x_1, x_2) = \sigma ((0, r)) x_2/r.\] Then $\tilde L=\sigma=0$ on $[0, r]\times \{0\}$ due to $\sigma=0$ on $[p_j, p_{j+1}]$, and
$\tilde L(0,r)=\sigma (0,r)$. Then, by convexity, we have $\sigma\leq \tilde L$ in $rT$. Let $\sigma_T$ be as in (\ref{sigL}). Then \[\det D^2 ( r^2\sigma_T(x/r))= 1\quad\text {in }rT.\]
Using the comparison principle in Lemma \ref{comp_prin}, we have
 \begin{equation}
 \label{sigup}
 \sigma \leq r^2\sigma_T(x/r) + \tilde L\quad\text{in } rT.
 \end{equation}
Let $m>1$ be a large constant to be determined.  Consider 
$z_2>0$ small so that $(r\gamma, (m+1)z_2)\in rT$. By convexity, we have
\begin{eqnarray*}
-\sigma_{x_2} (r\gamma, z_2) \geq \frac{\sigma(r\gamma, z_2)-\sigma(r\gamma, (m+1) z_2)}{m z_2}
\end{eqnarray*}
Therefore, recalling (\ref{siglow1}) together with (\ref{CqkN}), and using (\ref{sigup}) for $\sigma(r\gamma, (m+1) z_2)$, we can estimate
\begin{eqnarray}
\label{sigx21}
-\sigma_{x_2} (r\gamma, z_2) &\geq& \frac{(1+ 2d^2)\log (z_2/d) -d^2-C_0 - \sum_{k=1}^l a_k C(q_k, N)} {m} 
\nonumber\\&& -\frac{r^2 \sigma_T(\gamma, z_2 (m+1)/r)}{mz_2} - \frac{(m+1)\sigma(0, r)}{mr}
\end{eqnarray}
When $z_2>0$ is small, we have, in view of (\ref{Dsig})
\begin{eqnarray}
\label{sigx22}
-\frac{r^2 \sigma_T(\gamma, z_2 (m+1)/r)}{mz_2}&\geq& - \frac{1}{2}\frac{r (m+1)}{m} \sigma_{T, x_2} (\gamma,z_2 (m+1)/r)
\nonumber\\&\geq& c \frac{r (m+1)}{m}  |\log z_2|.
\end{eqnarray}
where $c>0$ is a positive constant.

Combining (\ref{sigx21}) with (\ref{sigx22}) and (\ref{sigbound}), we obtain 
the desired lower bound for  $|\sigma_{x_2}(r\gamma, z_2)|$ when $m$ is large and $z_2$ is small. 

The proof of the Proposition is complete.
\end{proof}


\begin{thebibliography}{9999999}
\bibitem[Ab]{Ab} Abreu, M. K\"ahler geometry of toric varieties and extremal metrics, {\it Inter. J. Math.} {\bf 9} (1998), 641-651.
\bibitem[A]{Ahlf} Ahlfors, L. V. {\em Complex analysis. An introduction to the theory of analytic functions of one complex variable.} Third edition. International Series in Pure and Applied Mathematics. McGraw-Hill Book Co., New York, 1978.
\bibitem[ADPZ]{ADPZ}Astala, K.; Duse, E.; Prause, I.; Zhong, X. Dimer Models and Conformal Structures,  arXiv:2004.02599v2.
\bibitem[C]{C} Caffarelli, L. A. A localization property of viscosity solutions to the Monge-Amp\`ere equation and their strict convexity. {\it Ann. of Math.} (2) {\bf 131} (1990), no. 1, 129--134. 

\bibitem[CLS]{CLS}Chen, B. ; Li, A-M. ; Sheng, L.  The Abreu equation with degenerated boundary conditions. {\it J. Differential Equations} {\bf 252} (2012), no. 10, 5235--5259.
\bibitem[CKP]{CKP}Cohn, H.; Kenyon, R.; Propp, J. A variational principle for domino tilings. {\it J. Amer. Math. Soc.} {\bf 14} (2001), no. 2, 297--346. 
\bibitem[DS]{DS} De Silva, D.; Savin, O. Minimizers of convex functionals arising in random surfaces. {\it Duke Math. J.} {\bf 151} (2010), no. 3, 487--532.
 \bibitem[D]{D} Du, S.Z. On Bernstein Problem of Affine Maximal Hypersurfaces, arXiv:2210.09127 [math.DG].
\bibitem[D1]{D1}Donaldson, S. K. Scalar curvature and stability of toric varieties.  {\it J. Differential Geom.}  {\bf 62}  (2002),  no. 2, 289--349.
\bibitem[D2]{D2} Donaldson, S. K. Interior estimates for solutions of Abreu's equation.  {\it Collect. Math.}  {\bf 56}  (2005),  no. 2, 103--142.
\bibitem[F]{F} Figalli, A. {\em The Monge-Amp\`ere equation and its applications.}  Zurich Lectures in Advanced Mathematics. European Mathematical Society (EMS), 2017.
\bibitem[GT]{GT} Gilbarg, D.; Trudinger, N. S. {\it Elliptic partial 
differential equations of second order}. Reprint of the 1998 edition. Classics in Mathematics. Springer-Verlag, Berlin, 2001.
\bibitem[Go]{Go}Gorin, V. {\em Lectures on random lozenge tilings.} Cambridge Studies in Advanced Mathematics, 193. Cambridge University Press, Cambridge, 2021.
 \bibitem[Gu]{G} Guti\'errez, C. E. {\em The Monge-Amp\`ere equation.} Second edition.  Progress in Nonlinear Differential Equations and their Applications, 89. 
  Birkha\"user, Boston, 2016.
\bibitem[H]{H}Hartenstine, D. The Dirichlet problem for the Monge-Amp\`ere equation in convex (but not strictly convex) domains. {\it Electron. J. Differential Equations} (2006), no. 138, 9 pp.
\bibitem[K]{K} Kenyon, R. Lectures on dimers. In: {\em Statistical mechanics}, 191--230, IAS/Park City Math. Ser., 16, Amer. Math. Soc., Providence, RI, 2009.
\bibitem[KO1]{KO} Kenyon, R.; Okounkov, A. Planar dimers and Harnack curves. {\it Duke Math. J.} {\bf 131} (2006), no. 3, 499--524.
\bibitem[KO2]{KO2} Kenyon, R.; Okounkov, A. Limit shapes and the complex Burgers equation. {\it Acta Math.} {\bf 199} (2007), no. 2, 263--302.
\bibitem[KOS]{KOS}Kenyon, R.; Okounkov, A.; Sheffield, S.
Dimers and amoebae. 
{\it Ann. of Math. (2)} {\bf 163} (2006), no. 3, 1019--1056.
 \bibitem[L1]{LSNS} Le, N. Q. The eigenvalue problem for the Monge-Amp\`ere operator on general bounded convex domains,  
 {\it Ann. Sc. Norm. Super. Pisa Cl. Sci.} (5) {\bf 18} (2018), no. 4, 1519-1559. 
\bibitem[L2]{LDCDS}Le, N. Q. Optimal boundary regularity for some singular Monge-Amp\`ere equations on bounded convex domains. {\it Discrete Contin. Dyn. Syst.}. {\bf 42} (2022), no. 5, 2199-2214.
\bibitem[LMT]{LMT} Le, N. Q.; Mitake, H.; Tran, H. V. {\em Dynamical and geometric aspects of Hamilton-Jacobi and linearized Monge-Amp\`ere equations--VIASM 2016}. Edited by Mitake and Tran. Lecture Notes in Mathematics, 2183. Springer, Cham, 2017.
\bibitem[LL]{LL} Li, M.; Li, Y.; Global regularity for a class of Monge-Amp\`ere type equations. {\it Sci. China Math.} {\bf 65} (2022), no. 3, 501--516.
\bibitem[Tso]{Tso}Tso, K. On a real Monge-Amp\`ere functional. {\it Invent. Math.} {\bf 101} (1990), no. 2, 425--448.



\end{thebibliography}
\end{document}